\documentclass[12pt,bezier]{article}
\usepackage{times}
\usepackage{booktabs}
\usepackage{pifont}
\usepackage{floatrow}
\floatsetup[table]{capposition=top}
\usepackage{caption}
\usepackage{mathrsfs}
\usepackage[fleqn]{amsmath}
\usepackage{amsfonts,amsthm,amssymb,mathrsfs,bbding}
\usepackage{txfonts}
\usepackage{graphics,multicol}
\usepackage{graphicx}
\usepackage{color}
\usepackage{xcolor}
\usepackage{enumerate}
\usepackage{caption}
\captionsetup{%
  figurename=Fig.,
  tablename=Tab.
}
\usepackage{cite}
\usepackage{latexsym,bm}
\usepackage{indentfirst}
\usepackage{mathtools}
\pagestyle{myheadings} \markright{} \textwidth 150mm \textheight 235mm \oddsidemargin=1cm
\evensidemargin=\oddsidemargin\topmargin=-1.5cm

\newtheorem{thm}{Theorem}[section]

\newtheorem{lem}{Lemma}[section]
\newtheorem{cor}{Corollary}[section]

\newtheorem{remark}{Remark}
\newtheorem{example}{Example}

\theoremstyle{definition}

\addtocounter{section}{0}

\begin{document}
\title{On graphs with three or four distinct normalized Laplacian eigenvalues\footnote{This work is supported
by NSFC (Nos. 11671344, 11531011, 11501486).}}
\author{{\small Xueyi Huang, \ \ Qiongxiang Huang\footnote{
Corresponding author.}\setcounter{footnote}{-1}\footnote{
\emph{E-mail address:} huangxymath@gmail.com (X. Huang), huangqx@xju.edu.cn (Q. Huang).} }\\[2mm]\scriptsize
College of Mathematics and Systems Science,
\scriptsize Xinjiang University, Urumqi, Xinjiang 830046, P. R. China}

\date{}
\maketitle
{\flushleft\large\bf Abstract}
In this paper,  we characterize all connected graphs with exactly three distinct normalized Laplacian eigenvalues of which one  is equal to $1$, determine all connected bipartite graphs with at least one vertex of degree $1$ having exactly four distinct normalized Laplacian eigenvalues, and find all unicyclic graphs with three or four distinct normalized Laplacian eigenvalues.
\vspace{0.1cm}
\begin{flushleft}
\textbf{Keywords:} Normalied Laplacian eigenvalue;  Bipartite graph; Symmetric BIBD; Hadamard matrix; Unicyclic graph.
\end{flushleft}
\textbf{AMS Classification:} 05C50

\section{Introduction}\label{s-1}
Let $G$ be a simple  undirected graph  on $n$ vertices with $m$ edges. The  \emph{adjacency matrix} $A=(a_{uv})$ of $G$ is the $n\times n$ matrix with rows and columns indexed by the vertices, where $a_{uv} = 1$ if $u$ is adjacent to $v$, and $0$ otherwise. Let $D=\mathrm{diag}(d_{v_1},d_{v_2},\ldots,d_{v_n})$ denote the \emph{diagonal degree matrix} of $G$. The well-known \emph{Laplacian matrix}   of  $G$ is defined as $L=D-A$. The \emph{normalized Laplacian matrix} (\emph{$\mathcal{L}$-matrix} for short) of $G$ is the $n \times n$ matrix $\mathcal{L} = (\ell_{uv})$ with
\begin{equation*}
\ell_{uv}=\left\{
\begin{array}{ll}
1& \mbox{if $u=v$, $d_u\neq 0$,}\\
-1/\sqrt{d_ud_v}&\mbox{if $u$ is adjacent to $v$,}\\
0&\mbox{otherwise.}
\end{array}
\right.
\end{equation*}
Clearly, if $G$ has no isolated vertices then $\mathcal{L}=D^{-\frac{1}{2}}LD^{-\frac{1}{2}}=I-D^{-\frac{1}{2}}AD^{-\frac{1}{2}}$. Since $\mathcal{L}$ is positive semidefinite, all its eigenvalues are nonnegative. These eigenvalues are also called the \emph{normalized Laplacian eigenvalues} ($\mathcal{L}$-\emph{eigenvalues} for short) of $G$.  We denote by $\lambda_1>\lambda_2>\cdots>\lambda_t$  all the distinct $\mathcal{L}$-eigenvalues of $G$ with multiplicities $m_1,m_2,\ldots,m_t$ ($\sum_{i=1}^tm_i=n$), respectively. All these $\mathcal{L}$-eigenvalues together with their multiplicities are called the \emph{normalized Laplacian spectrum} ($\mathcal{L}$-\emph{spectrum} for short) of $G$ denoted by $\mathrm{Spec}_{\mathcal{L}}(G)=\big\{[\lambda_1]^{m_1},[\lambda_2]^{m_2},\ldots,[\lambda_t]^{m_t}\big\}$. With respect to  adjacency matrix, the  \emph{adjacency spectrum} of $G$ can be similarly defined and denoted by $\mathrm{Spec}_A(G)$.

A connected graph on $n$ vertices with $m$ edges is called a \emph{$k$-cyclic graph}  if $k=m-n+1$. In particular,  the notions \emph{tree} and \emph{unicyclic graph} are respectively defined as the $k$-cyclic graph with $k=0$ and $k=1$.

Throughout this paper,  we denote the \emph{neighborhood} of a vertex $v\in V(G)$ by $N_{G}(v)$, the disjoint union of graphs $G$ and $H$ by $G\cup H$, the complete graph on  $n$ vertices by $K_n$ and the complete multipartite graph with $s$ parts of sizes $n_1,\ldots,n_s$  by $K_{n_1,\ldots,n_s}$. Also,  the  $n\times n$ identity matrix,  the $n\times 1$ all-ones  vector and  the $n\times n$ all-ones matrix will
be denoted by  $I_n$, $\mathbf{j}_n$  and $J_n$, respectively.

Connected graphs with  few distinct eigenvalues have been investigated frequently for several graph matrices over the past two decades, such as the adjacency matrix \cite{Bridges,Caen,Cheng,Cioaba,Cioaba1,Dam1,Dam3,Dam7,Dam2,Dam5,Dam4,Doob,Huang,Lima,Muzychuk,Rowlinson,Shrikhande}, the Laplacian matrix \cite{Dam8,Mohammadian,Wang}, the signless Lapalcian matrix \cite{Ayoobi}, the Seidel matrix \cite{Seidel}, and the universal adjacency matrix \cite{Haemers}. One of the reason is that such graphs in general have pretty combinatorial properties and a rich structure \cite{Dam2}. With regard to normalized Laplacian matrix,  Cavers \cite{Cavers} characterized all connected graphs with at least one vertex of degree $1$ having three distinct $\mathcal{L}$-eigenvalues, Van Dam et al. \cite{Dam9} gave all connected triangle-free graphs (particularly, bipartite graphs) with three distinct $\mathcal{L}$-eigenvalues, and Braga et al. \cite{Braga} determined all trees with four or five distinct $\mathcal{L}$-eigenvalues.

In this paper,  we characterize all connected graphs with exactly three distinct $\mathcal{L}$-eigenvalues of which one  $\mathcal{L}$-eigenvalue is $1$, determine all connected bipartite graphs with at least one vertex of degree $1$ that have exactly four distinct $\mathcal{L}$-eigenvalues, and find all unicyclic graphs with three or four distinct $\mathcal{L}$-eigenvalues.

\section{Main tools}\label{s-2}
First of all, we recall some basic results about $\mathcal{L}$-eigenvalues.
\begin{lem}(See \cite{Chung,Dam9}.)\label{lem-2-2}
Let $\lambda_1\geq \lambda_2\geq\cdots\geq\lambda_n$ ($n\geq 2$) be all the $\mathcal{L}$-eigenvalues of $G$. Then $G$ has the following properties.
\begin{enumerate}[(i)]
\vspace{-0.2cm}
\item $\lambda_n = 0$,
\vspace{-0.3cm}
\item $\sum_i\lambda_i\leq n$ with equality holding if and only if $G$ has no isolated vertices,
    \vspace{-0.3cm}
\item $\lambda_{n-1}\leq \frac{n}{n-1}$ with equality holding if and only if $G$ is a complete graph on $n$ vertices,
    \vspace{-0.3cm}
\item $\lambda_{n-1}\leq 1$ if $G$ is non-complete,
\vspace{-0.3cm}
\item $\lambda_{1}\geq \frac{n}{n-1}$ if $G$ has no isolated vertices,
\vspace{-0.3cm}
\item $\lambda_{n-1} > 0$ if $G$ is connected. If $\lambda_{n-i+1} = 0$ and $\lambda_{n-i}\neq 0$, then $G$ has exactly $i$ connected components,
    \vspace{-0.3cm}
\item The $\mathcal{L}$-spectrum of $G$ is the union of the  $\mathcal{L}$-spectra of its connected components,
\vspace{-0.3cm}
\item $\lambda_i\leq 2$ for all $i$, with $\lambda_1 = 2$ if and only if some connected component of $G$ is a non-trivial bipartite graph,
\vspace{-0.3cm}
\item $G$ is bipartite if and only if $2 -\lambda_i$ is an  $\mathcal{L}$-eigenvalue of $G$ for each $i$.
\end{enumerate}
\end{lem}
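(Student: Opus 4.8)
\medskip
\noindent\textbf{Proof plan.}
The plan is to work throughout with the identity $\mathcal{L}=I-\mathcal{N}$ on the set of non-isolated vertices, where $\mathcal{N}=D^{-1/2}AD^{-1/2}$, so that the $\mathcal{L}$-eigenvalues are exactly the numbers $1-\nu$ with $\nu$ an eigenvalue of $\mathcal{N}$; an isolated vertex contributes a $1\times 1$ zero block to $\mathcal{L}$, which one simply tracks separately. Since $\mathcal{L}$, $\mathcal{N}$ and the signless Laplacian $Q=D+A$ are all block-diagonal with respect to the connected components of $G$, item~(vii) is immediate, and for the remaining items I would argue component by component while keeping account of multiplicities. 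The two workhorses are the quadratic-form identities: writing $z=D^{-1/2}x$,
\[
x^{\top}\mathcal{L}x=z^{\top}Lz=\sum_{uv\in E(G)}(z_u-z_v)^2,\qquad x^{\top}(2I-\mathcal{L})x=z^{\top}Qz=\sum_{uv\in E(G)}(z_u+z_v)^2 .
\]

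From here the ``linear'' items fall out quickly. For~(i) I would take $x=D^{1/2}\mathbf{j}$, so $z=\mathbf{j}$ and $x^{\top}\mathcal{L}x=0$; with positive semidefiniteness this gives $\lambda_n=0$. For~(ii), $\sum_i\lambda_i=\mathrm{tr}(\mathcal{L})$ counts the non-isolated vertices, so it equals $n$ exactly when $G$ has none. Item~(v) is then $\lambda_1\ge\frac{1}{n-1}\sum_{i<n}\lambda_i=\frac{n}{n-1}$, and the inequality in~(iii) is the opposite averaging bound $(n-1)\lambda_{n-1}\le\sum_{i<n}\lambda_i\le n$. For the kernel assertion in~(vi), $\mathcal{L}x=0$ forces $x^{\top}\mathcal{L}x=0$, hence $z=D^{-1/2}x$ is constant on each non-trivial component; so $\dim\ker\mathcal{L}$ equals the number of components, which is $1$ when $G$ is connected, yielding $\lambda_{n-1}>0$.

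Next I would handle the items concerning the value $2$. For~(viii), the second identity shows $2I-\mathcal{L}=D^{-1/2}QD^{-1/2}$ is positive semidefinite, so $\lambda_i\le 2$, and for a connected graph $\lambda_1=2$ iff $Q$ is singular, i.e.\ iff that component is non-trivial and bipartite (exhibiting the null vector $z=+1$ on one side, $-1$ on the other); the single-vertex block of $2I-\mathcal{L}$ equals $2$, which is why ``non-trivial'' is needed. For the forward direction of~(ix), if $G$ is bipartite with sides $X,Y$ then conjugating $\mathcal{N}$ by $\mathrm{diag}(I_{|X|},-I_{|Y|})$ turns it into $-\mathcal{N}$, so the spectrum of $\mathcal{N}$ is symmetric about $0$ and that of $\mathcal{L}$ about $1$, i.e.\ $2-\lambda_i$ is always an $\mathcal{L}$-eigenvalue. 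For the converse I would pass to traces: symmetry of the $\mathcal{L}$-spectrum about $1$ means the spectrum of $\mathcal{N}$ is symmetric about $0$, so $\mathrm{tr}(\mathcal{N}^{2k+1})=0$ for all $k\ge 1$; but this trace is a sum of nonnegative weights $\prod_{uv}(d_ud_v)^{-1/2}$ over closed walks of odd length $2k+1$, so $G$ has no odd closed walk, hence no odd cycle, hence is bipartite.

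The one item that needs a genuine idea is~(iv) (and with it the equality case of~(iii)); note that connectivity is essential here, as $K_3\cup K_3$ shows. Assuming $G$ connected, $n\ge 2$ and non-complete, every degree is $\ge 1$ and there exist non-adjacent vertices $u,w$. By Courant--Fischer, $1-\lambda_{n-1}$ is the second largest eigenvalue of $\mathcal{N}$ (whose largest eigenvalue is $1$, with Perron eigenvector $D^{1/2}\mathbf{j}$), namely $\max\{x^{\top}\mathcal{N}x/x^{\top}x:\ x\perp D^{1/2}\mathbf{j}\}$. I would plug in $x=\sqrt{d_w}\,\mathbf{e}_u-\sqrt{d_u}\,\mathbf{e}_w$: it is orthogonal to $D^{1/2}\mathbf{j}$ and supported on the non-adjacent pair $\{u,w\}$, so $x^{\top}\mathcal{N}x=0$ and hence $\lambda_{n-1}\le 1$. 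Finally, if $\lambda_{n-1}=\frac{n}{n-1}>1$ then $G$ must be connected (a disconnected graph has $\lambda_{n-1}=0$) and then, by~(iv), complete; conversely $\mathcal{L}(K_n)=\frac{n}{n-1}(I_n-\tfrac{1}{n}J_n)$ has spectrum $\{[0]^1,[\tfrac{n}{n-1}]^{\,n-1}\}$, so equality in~(iii) characterizes $K_n$. I expect the real friction to lie not in any single inequality but in the bookkeeping of the degenerate cases --- isolated vertices, the graph $K_1$, and trivial versus non-trivial bipartite components --- which is why I would fix the convention $\mathcal{L}=I-\mathcal{N}$ on the non-isolated part at the very start and dispose of the edgeless case by hand.
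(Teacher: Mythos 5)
The paper never proves Lemma \ref{lem-2-2}: it is imported wholesale from Chung's book and from van Dam--Omidi, so there is no in-paper argument to measure yours against. Your proposal is a correct, self-contained reconstruction along the standard lines (the quadratic forms $\sum_{uv\in E}(z_u-z_v)^2$ and $\sum_{uv\in E}(z_u+z_v)^2$ for items (i), (vi), (viii); trace and averaging for (ii), (iii), (v); block-diagonality for (vii); signature conjugation plus the odd-closed-walk trace argument for (ix); and the test vector $\sqrt{d_w}\,\mathbf{e}_u-\sqrt{d_u}\,\mathbf{e}_w$ for (iv)). Note that for the borderline behaviour of $\lambda_{n-1}$ the paper itself later takes a different route in Corollary \ref{cor-2-0}, deducing the equality case of (iv) from Sylvester's law of inertia and the known classification of graphs with second adjacency eigenvalue $0$, whereas your Courant--Fischer computation proves the inequality directly and more elementarily. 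Two small corrections. First, your parenthetical claim that connectivity is essential for (iv) ``as $K_3\cup K_3$ shows'' is wrong: a disconnected graph has $0$ as an $\mathcal{L}$-eigenvalue with multiplicity at least two, hence $\lambda_{n-1}=0\leq 1$, so (iv) holds for every non-complete graph, and indeed $\lambda_5(K_3\cup K_3)=0$. Your proof is unaffected, since you dispose of the disconnected case by exactly this observation when treating the equality case of (iii), but the remark should be deleted. Second, in the converse of (ix) the passage from ``the $\mathcal{L}$-spectrum is symmetric about $1$'' to ``the spectrum of $\mathcal{N}$ is symmetric about $0$'' is only valid once isolated vertices are excluded (each contributes a $0$ to the $\mathcal{L}$-spectrum but no eigenvalue of $\mathcal{N}$, and an edgeless graph is bipartite yet fails the stated equivalence); this is an imprecision inherited from the statement itself, and under the convention you fix at the outset your trace argument is complete.
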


Two $n\times n$ real symmetric matrices $M$ and $N$ are said to be \emph{congruent} if there exists an invertible matrix $S\in \mathbb{R}^{n\times n}$ such that
$S^TMS = N$. The well-known \emph{Sylvester's law of inertia} states that two congruent real symmetric matrices have the same numbers of positive, negative, and zero eigenvalues. As Lemma \ref{lem-2-2} (iv) suggests that $\lambda_{n-1}\leq 1$ if $G$ is not a complete graph, we now characterize all the connected graphs attaining this bound.
\begin{cor}\label{cor-2-0}
Let $G$ be a connected non-complete graph, and $\lambda_{n-1}$ the second least $\mathcal{L}$-eigenvalue of $G$. Then $\lambda_{n-1}\leq 1$ with equality holding if and only if $G$ is a complete multipartite graph.
\end{cor}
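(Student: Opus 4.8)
The plan is to translate the condition $\lambda_{n-1}=1$ into a statement about the adjacency matrix via a congruence, and then apply the classical description of graphs having exactly one positive adjacency eigenvalue. Since $G$ is connected and non-complete, it has no isolated vertex and Lemma~\ref{lem-2-2}(iv) already supplies $\lambda_{n-1}\le 1$, so only the equality case needs work. As $D^{-1/2}$ is defined and invertible we have $\mathcal{L}=I_n-D^{-1/2}AD^{-1/2}$; writing $\mathcal{N}=D^{-1/2}AD^{-1/2}$, the $\mathcal{L}$-eigenvalues of $G$ are exactly the numbers $1-\rho$ as $\rho$ runs over the eigenvalues of $\mathcal{N}$, and since $G$ is connected, $1$ is a simple eigenvalue of $\mathcal{N}$ and is its largest one (it corresponds to $\lambda_n=0$; cf. Lemma~\ref{lem-2-2}(vi)). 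Consequently $\lambda_{n-1}\ge 1$ holds if and only if every eigenvalue of $\mathcal{N}$ other than $1$ is nonpositive, i.e. if and only if $\mathcal{N}$ has exactly one positive eigenvalue; combining this with $\lambda_{n-1}\le 1$ gives, for our non-complete connected $G$, that $\lambda_{n-1}=1$ if and only if $\mathcal{N}$ has exactly one positive eigenvalue. Finally $\mathcal{N}=(D^{-1/2})^{T}A\,(D^{-1/2})$ with $D^{-1/2}$ invertible, so $\mathcal{N}$ is congruent to $A$ and, by Sylvester's law of inertia, has the same number of positive eigenvalues as $A$. Thus the corollary reduces to the claim that a connected graph with at least one edge has exactly one positive adjacency eigenvalue if and only if it is complete multipartite.

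I would establish this last claim (essentially a theorem of J.~H.~Smith) directly. For the \emph{if} direction, if $G=K_{n_1,\ldots,n_s}$ then $A=J_n-\bigoplus_{i=1}^{s}J_{n_i}$, so $J_n-A$ is positive semidefinite; hence, by monotonicity of eigenvalues, the second largest eigenvalue of $A$ is at most the second largest eigenvalue of $J_n$, which is $0$, while the largest eigenvalue of $A$ is positive because $G$ has an edge, so $A$ has exactly one positive eigenvalue. For the \emph{only if} direction I would argue contrapositively: if the connected graph $G$ is not complete multipartite, then its complement is not a disjoint union of cliques, so $G$ has an induced $K_1\cup K_2$, say on $\{a,b,c\}$ with $a\sim b$ while $c$ is adjacent to neither $a$ nor $b$. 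Choosing a shortest path $c=x_0,x_1,\ldots,x_k=b$ in $G$ (so $k\ge 2$, since $c\not\sim b$), one obtains an induced $P_4$ on $x_0,x_1,x_2,x_3$ when $k\ge 3$ (chordless, by minimality of the path); an induced $P_4$, namely $a\sim b\sim x_1\sim c$, when $k=2$ and $a\not\sim x_1$; and an induced \emph{paw}, namely the triangle on $\{a,b,x_1\}$ together with the pendant edge $x_1c$, when $k=2$ and $a\sim x_1$. Since both $P_4$ and the paw have two positive adjacency eigenvalues, Cauchy interlacing (the eigenvalues of a principal submatrix interlace those of the whole matrix) forces $A$ to have at least two positive eigenvalues, a contradiction. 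Together with the reduction of the first paragraph, this proves the corollary.

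I expect the \emph{only if} half of the adjacency characterization to be the main obstacle: one must make sure that the failure to be complete multipartite always produces one of a short list of small induced subgraphs with two positive eigenvalues, and the care lies in the case analysis on the connecting path -- in particular, verifying in the case $k=2$ that $a,b,c,x_1$ are four distinct vertices (using $c\not\sim a$, $c\not\sim b$ and $c\sim x_1$) and that the subgraph they induce is exactly the claimed one. By contrast, the spectra of $P_4$ and of the paw that the argument uses, the congruence step, and the invocation of Sylvester's law of inertia are all routine.
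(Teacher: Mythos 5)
Your argument is correct and follows essentially the same route as the paper: both reduce the equality case to a statement about the adjacency matrix via the congruence $D^{-\frac{1}{2}}AD^{-\frac{1}{2}}=(D^{-\frac{1}{2}})^{T}A(D^{-\frac{1}{2}})$ together with Sylvester's law of inertia, and then invoke the classical characterization of connected non-complete graphs whose adjacency matrix has exactly one positive eigenvalue (equivalently, second largest adjacency eigenvalue $0$) as the complete multipartite graphs. The only difference is that the paper cites this last fact as well known, whereas you supply a self-contained proof via the induced $P_4$/paw dichotomy and Cauchy interlacing; that added argument is also correct.
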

\begin{proof}
Suppose $A$ and $D$ are the adjacency matrix and diagonal degree matrix of $G$, respectively. Let $A^*=D^{-\frac{1}{2}}AD^{-\frac{1}{2}}$. The $\mathcal{L}$-matrix of $G$ is $\mathcal{L}=I-A^*$. Therefore, $\lambda_{n-1}=1$ if and only if the second largest eigenvalue of $A^*$ is equal to $0$, which is the case if and only if the second largest eigenvalue of $A$ is equal to $0$ because $A^*$ and $A$ have the same number of positive, negative, and zero eigenvalues due to $A^*$ and $A$ are congruent. It is well known that  a connected graph has $0$ as its second largest  (adjacency) eigenvalue  if and only if it is a complete multipartite graph (except the complete graph). The result follows.
\end{proof}

The following lemma suggests that some special $\mathcal{L}$-eigenvlaues are related to
some local properties of graphs.

\begin{lem}\label{lem-2-3}
Let $G$ be a graph on $n$ vertices, and let $X=\{v_1,v_2,\ldots,v_p\}$ $(p\geq 2)$ be a set of vertices such that $N_G(v_1)\setminus X=N_G(v_2)\setminus X=\cdots=N_G(v_p)\setminus X=\{u_1,u_2,\ldots,u_q\}$ $(q\geq 1)$. We have
\begin{enumerate}[(i)]
\vspace{-0.2cm}
\item if $X$ is an independent set of $G$, then $1$ is an $\mathcal{L}$-eigenvalue of $G$ with multiplicities at least $p-1$;
    \vspace{-0.3cm}
\item if $X$ is a clique of $G$, then $\frac{p+q}{p+q-1}$ is an $\mathcal{L}$-eigenvalue of $G$ with multiplicities at least $p-1$.
\end{enumerate}
\end{lem}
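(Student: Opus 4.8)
The plan is to write down $p-1$ linearly independent eigenvectors explicitly, all supported on $X$. For $i=2,\dots,p$ let $\mathbf{x}^{(i)}\in\mathbb{R}^{n}$ be the vector that equals $1$ at $v_1$, equals $-1$ at $v_i$, and is $0$ at every other vertex. The $p-1$ vectors $\mathbf{x}^{(2)},\dots,\mathbf{x}^{(p)}$ are linearly independent (for instance, $\mathbf{x}^{(i)}$ is the only one of them that is nonzero at $v_i$), so it suffices to check that each $\mathbf{x}^{(i)}$ is an eigenvector of $\mathcal{L}$ for the claimed eigenvalue.

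First I would record the structural consequences of the hypothesis. If $X$ is independent then $N_G(v_k)=\{u_1,\dots,u_q\}$ for every $k$, so $d_{v_k}=q$ for all $k$; if $X$ is a clique then $N_G(v_k)=(X\setminus\{v_k\})\cup\{u_1,\dots,u_q\}$, so $d_{v_k}=p+q-1$ for all $k$. In either case $v_1$ and $v_i$ have a common degree $d$, and $d>0$ since $q\ge 1$; moreover every vertex outside $X$ that is adjacent to $v_1$ or to $v_i$ lies in $\{u_1,\dots,u_q\}$.

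Then I would evaluate $\mathcal{L}\mathbf{x}^{(i)}=(I-D^{-1/2}AD^{-1/2})\mathbf{x}^{(i)}$ coordinate by coordinate. At each vertex $w$ adjacent to both $v_1$ and $v_i$ — namely every $u_j$, and, in the clique case, also every $v_k$ with $k\neq 1,i$ — the off-diagonal weights $1/\sqrt{d_w d_{v_1}}$ and $1/\sqrt{d_w d_{v_i}}$ coincide because $d_{v_1}=d_{v_i}=d$, so the contributions of $v_1$ and $v_i$ cancel and that coordinate of $\mathcal{L}\mathbf{x}^{(i)}$ is $0$; at every vertex outside $X\cup\{u_1,\dots,u_q\}$ the coordinate is $0$ as well, since such a vertex is adjacent to neither $v_1$ nor $v_i$. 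It remains to look at $v_1$ and $v_i$: in case (i) all their neighbours lie in $\{u_1,\dots,u_q\}$, where $\mathbf{x}^{(i)}$ vanishes, so $(\mathcal{L}\mathbf{x}^{(i)})_{v_1}=1=1\cdot x^{(i)}_{v_1}$ and similarly at $v_i$; in case (ii) $v_1$ and $v_i$ are adjacent and each contributes to the other, giving $(\mathcal{L}\mathbf{x}^{(i)})_{v_1}=1+\tfrac1d=\tfrac{p+q}{p+q-1}x^{(i)}_{v_1}$ and likewise at $v_i$. Hence $\mathcal{L}\mathbf{x}^{(i)}=\lambda\,\mathbf{x}^{(i)}$ with $\lambda=1$ in case (i) and $\lambda=\tfrac{p+q}{p+q-1}$ in case (ii), so this eigenvalue has multiplicity at least $p-1$.

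There is no serious obstacle here; the only thing to be careful about is the bookkeeping that ensures the support of $\mathbf{x}^{(i)}$ never ``leaks'' outside $X\cup\{u_1,\dots,u_q\}$, together with the fact that $v_1$ and $v_i$ genuinely have equal degree — the latter being exactly what yields a single eigenvalue with matching off-diagonal weights. Equivalently, one may argue more abstractly: the hypothesis makes $v_1$ and $v_i$ twin vertices, so $A(\mathbf{e}_{v_1}-\mathbf{e}_{v_i})=\mu(\mathbf{e}_{v_1}-\mathbf{e}_{v_i})$ with $\mu=0$ in case (i) and $\mu=-1$ in case (ii); since $D^{\pm 1/2}$ acts as the scalar $d^{\pm 1/2}$ on $\mathrm{span}\{\mathbf{e}_{v_1},\mathbf{e}_{v_i}\}$, one gets $\mathcal{L}(\mathbf{e}_{v_1}-\mathbf{e}_{v_i})=(1-\mu/d)(\mathbf{e}_{v_1}-\mathbf{e}_{v_i})$, which is $1$ or $\tfrac{p+q}{p+q-1}$ accordingly.
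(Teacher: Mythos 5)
Your proposal is correct and follows essentially the same route as the paper: both exhibit the $p-1$ linearly independent eigenvectors supported on pairs of vertices of $X$ (the paper pivots on $v_p$, you on $v_1$) and verify the eigenvalue equation coordinatewise, using that the twin vertices have equal degree. You additionally write out the clique case and the common-degree values explicitly, which the paper leaves as ``one can similarly verify,'' but this is the same argument.
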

\begin{proof}
For any fixed $i$ ($1\leq i\leq p-1$), let $x_i\in\mathbb{R}^n$ be the vector defined by $x_i(v_i)=1$, $x_i(v_p)=-1$ and  $x_i(v)=0$ for each $v\not\in\{v_i,v_p\}$ in $G$. To prove (i), it suffices to verify that each $x_i$ is an eigenvector of $G$ corresponding to the $\mathcal{L}$-eigenvalue $1$.  In fact, under the assumption of (i),  we have $N_G(v_1)=N_G(v_2)=\cdots=N_G(v_p)=\{u_1,u_2,\ldots,u_q\}$ since $X$ is independent set. Recall that $\mathcal{L}=I-D^{-\frac{1}{2}}AD^{-\frac{1}{2}}$, the following equalities conform that $\mathcal{L}x_i=1\cdot x_i$:
\begin{equation*}
\left\{
\begin{aligned}
&x_i(v_j)-\sum_{v\sim v_j}\frac{1}{\sqrt{d_{v}d_{v_j}}}x_i(v)=x_i(v_j)-\sum_{l=1}^{q}\frac{1}{\sqrt{d_{u_l}d_{v_j}}}x_i(u_l)=1\cdot x_i(v_j) ~\text{for $1\leq j\leq p$},\\
&x_i(u_j)-\sum_{v\sim u_j}\frac{1}{\sqrt{d_{v}d_{u_j}}}x_i(v)=0-\Bigg(\frac{1}{\sqrt{d_{v_i}d_{u_j}}}-\frac{1}{\sqrt{d_{v_p}d_{u_j}}}\Bigg)=1\cdot x_i(u_j) ~\text{for $1\leq j\leq q$},\\
&x_i(u)-\sum_{v\sim u}\frac{1}{\sqrt{d_{v}d_{u}}}x_i(v)=0=1\cdot x_i(u)~\text{for $u\not\in\{v_1,\ldots,v_p,u_1,\ldots,u_q\}$}.
\end{aligned}
\right.
\end{equation*}
Hence $1$ is an $\mathcal{L}$-eigenvalue of $G$ with multiplicities at least $p-1$.

Under the assumption of (ii), one can similarly verify that the same $x_i\in\mathbb{R}^n$ defined above is also an 
 eigenvector of $G$ with respect to the $\mathcal{L}$-eigenvalue $\frac{p+q}{p+q-1}$ for $i=1,2,\ldots,p-1$.
\end{proof}
By Lemma \ref{lem-2-2} (ix) and Lemma \ref{lem-2-3} (i), we obtain the following corollary immediately.
\begin{cor}\label{cor-2-1}
Let $G$ be a bipartite graph. If there are two vertices in $G$ share the same neighborhood, then the number of distinct $\mathcal{L}$-eigenvalues of $G$ must be odd.
\end{cor}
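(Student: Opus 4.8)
The plan is to combine the two cited results exactly as the sentence preceding the statement suggests. First I would fix two vertices $u,v$ of $G$ with $N_G(u)=N_G(v)$. Since $u\notin N_G(u)=N_G(v)$, the vertices $u$ and $v$ are non-adjacent, so $X=\{u,v\}$ is an independent set whose common neighbourhood outside $X$ is $N_G(u)\setminus X=N_G(v)\setminus X=N_G(u)$, which I take to be non-empty (this is part of the hypothesis of Lemma \ref{lem-2-3}, and is automatic once $G$ has no isolated vertices). Applying Lemma \ref{lem-2-3} (i) with $p=2$ then yields that $1$ is an $\mathcal{L}$-eigenvalue of $G$.

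Next I would use the bipartite symmetry. By Lemma \ref{lem-2-2} (ix), since $G$ is bipartite the number $2-\lambda$ is an $\mathcal{L}$-eigenvalue of $G$ whenever $\lambda$ is. Hence the map $\sigma\colon\lambda\mapsto 2-\lambda$ is an involution of the finite set $S=\{\lambda_1,\dots,\lambda_t\}$ of distinct $\mathcal{L}$-eigenvalues of $G$. Being an involution of a finite set, $\sigma$ partitions $S$ into fixed points and $2$-element orbits $\{\lambda,2-\lambda\}$, and the only possible fixed point is the solution of $2-\lambda=\lambda$, namely $\lambda=1$.

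Finally I would count. By the first step $1\in S$, so $S$ contains exactly one fixed point of $\sigma$ and all of its remaining elements are distributed into $2$-element orbits. Consequently $t=|S|$ equals $1$ plus an even number, i.e.\ $t$ is odd, which is the assertion.

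Since each step is a direct invocation of a lemma already proved, I do not anticipate a real obstacle. The only points that need a little care are checking that $X=\{u,v\}$ genuinely meets the hypotheses of Lemma \ref{lem-2-3} (that $u,v$ are non-adjacent and have a non-empty common neighbourhood), and the elementary observation that $\lambda=1$ is the unique fixed point of $\lambda\mapsto 2-\lambda$, which is what makes the parity argument close.
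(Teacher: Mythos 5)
Your proposal is correct and follows exactly the route the paper intends: the paper states the corollary as an immediate consequence of Lemma \ref{lem-2-3} (i) (giving $1$ as an $\mathcal{L}$-eigenvalue) and Lemma \ref{lem-2-2} (ix) (giving the involution $\lambda\mapsto 2-\lambda$ on the distinct eigenvalues, with $1$ as its unique fixed point), which is precisely your argument written out in detail. Your added care about the common neighbourhood being non-empty is a reasonable observation about an implicit hypothesis, not a deviation from the paper's approach.
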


Let $M$ be a real symmetric matrix whose all distinct eigenvalues are $\lambda_1,\ldots,\lambda_s$. Then $M$ has the \emph{spectral decomposition} $M=\lambda_1P_1+\cdots+\lambda_sP_s$, where $P_i=\mathbf{x}_{1}\mathbf{x}_{1}^T+\cdots+\mathbf{x}_{r}\mathbf{x}_{r}^T$ if the eigenspace $\mathcal{E}(\lambda_i)$ has $\{\mathbf{x}_1,\ldots,\mathbf{x}_r\}$ as an orthonormal basis. It is well known that  for any real polynomial $f(x)$, we have $f(M)=f(\lambda_1)P_1+\cdots+f(\lambda_s)P_s$.

Note that  for any graph $G$, we have $\mathcal{L}\cdot D^\frac{1}{2}\mathbf{j}=0\cdot D^\frac{1}{2}\mathbf{j}$, and so  $D^\frac{1}{2}\mathbf{j}$ is  an eigenvector  with respect to the $\mathcal{L}$-eigenvalue $0$ of $G$. Moreover, if $G$ is connected, from Lemma \ref{lem-2-2} (vi) we know that $0$ is a simple $\mathcal{L}$-eigenvalue of $G$. Keeping these facts in mind, one can easily deduce the following lemma by using the method of spectral decomposition.

\begin{lem}(See \cite{Cavers}.)\label{lem-2-1}
Let $G$ be a connected graph on $n\geq3$ vertices with $m$ edges and fix $2\leq s\leq n$. Then $G$ has exactly $s$ distinct $\mathcal{L}$-eigenvalues, namely $\lambda_1,\lambda_2,\ldots,\lambda_{s-1}$ and $0$, if and only if there exists $s-1$ distinct nonzero
numbers $\lambda_1,\lambda_2,\ldots,\lambda_{s-1}$ such that
\begin{equation}\label{equ-1}
\prod_{i=1}^{s-1}(\mathcal{L}-\lambda_iI)=(-1)^{s-1}\Bigg(\prod_{i=1}^{s-1}\lambda_i\Bigg)\frac{D^{\frac{1}{2}}JD^{\frac{1}{2}}}{2m},
\end{equation}
where $D$ is the diagonal degree matrix  of $G$.
\end{lem}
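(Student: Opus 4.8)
The plan is to deduce everything from the functional calculus for the real symmetric matrix $\mathcal{L}$ together with its spectral decomposition, the essential structural input being the connectedness of $G$. Because $G$ is connected, $0$ is a simple $\mathcal{L}$-eigenvalue, and since $\mathcal{L}\,D^{\frac12}\mathbf{j}=0$ while $\|D^{\frac12}\mathbf{j}\|^{2}=\mathbf{j}^{T}D\mathbf{j}=\sum_{v}d_{v}=2m$, the orthogonal projection onto the eigenspace $\mathcal{E}(0)$ is the rank-one matrix $P_{0}=\frac{1}{2m}D^{\frac12}JD^{\frac12}$. Writing $f(x)=\prod_{i=1}^{s-1}(x-\lambda_{i})$ and noting $f(0)=\prod_{i=1}^{s-1}(-\lambda_{i})=(-1)^{s-1}\prod_{i=1}^{s-1}\lambda_{i}$, identity \eqref{equ-1} is precisely the matrix equation $f(\mathcal{L})=f(0)\,P_{0}$, and I would recast the whole statement in this form before proving it.

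For the forward implication, suppose the distinct $\mathcal{L}$-eigenvalues of $G$ are exactly $\lambda_{1},\dots,\lambda_{s-1}$ and $0$, with spectral decomposition $\mathcal{L}=\sum_{i=1}^{s-1}\lambda_{i}P_{i}+0\cdot P_{0}$, where $P_{1},\dots,P_{s-1},P_{0}$ are the corresponding orthogonal projections. The functional calculus gives $f(\mathcal{L})=\sum_{i=1}^{s-1}f(\lambda_{i})P_{i}+f(0)P_{0}$, and since every $f(\lambda_{i})=0$ this collapses to $f(\mathcal{L})=f(0)P_{0}$, which is \eqref{equ-1}. For the converse, assume \eqref{equ-1}, i.e. $f(\mathcal{L})=f(0)P_{0}$ with $\lambda_{1},\dots,\lambda_{s-1}$ distinct and nonzero, and take any $\mathcal{L}$-eigenvalue $\mu$ with eigenvector $\mathbf{v}$. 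If $\mu\ne 0$ then $\mathbf{v}$ is orthogonal to $D^{\frac12}\mathbf{j}$ (eigenvectors of a symmetric matrix for distinct eigenvalues are orthogonal), so $P_{0}\mathbf{v}=\frac{1}{2m}D^{\frac12}\mathbf{j}\,(D^{\frac12}\mathbf{j})^{T}\mathbf{v}=0$, whence $f(\mu)\mathbf{v}=f(\mathcal{L})\mathbf{v}=0$ and therefore $f(\mu)=0$, i.e. $\mu\in\{\lambda_{1},\dots,\lambda_{s-1}\}$; thus every distinct $\mathcal{L}$-eigenvalue of $G$ lies in $\{0,\lambda_{1},\dots,\lambda_{s-1}\}$, so $G$ has at most $s$ of them, one of which is $0$.

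The step I expect to be the real work is upgrading this inclusion to an equality, i.e. confirming that each prescribed $\lambda_{i}$ is genuinely attained so that $G$ has exactly $s$ distinct $\mathcal{L}$-eigenvalues. One way is to observe that if some $\lambda_{k}$ were not an eigenvalue, then $\mathcal{L}-\lambda_{k}I$ would be invertible, and multiplying \eqref{equ-1} by its inverse together with $(\mathcal{L}-\lambda_{k}I)^{-1}P_{0}=-\lambda_{k}^{-1}P_{0}$ would produce an identity of exactly the same shape with the factor indexed by $k$ deleted, forcing $G$ to have fewer than $s$ distinct $\mathcal{L}$-eigenvalues; under the convention that $s$ denotes the number of distinct $\mathcal{L}$-eigenvalues (equivalently, that the $\lambda_{i}$ are taken to be the actual nonzero eigenvalues), this case is excluded and equality holds. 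Apart from this bookkeeping the argument is routine: a polynomial of a symmetric matrix acts as a scalar on each eigenspace, and the hypothesis that $G$ is connected is used only to pin down $P_{0}$ as the specific rank-one matrix $\frac{1}{2m}D^{\frac12}JD^{\frac12}$.
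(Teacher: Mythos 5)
Your proposal is correct and follows essentially the same route the paper intends: the paper only sketches this lemma (citing Cavers) by noting that $D^{\frac{1}{2}}\mathbf{j}$ spans the simple $0$-eigenspace of a connected graph, so that $\frac{1}{2m}D^{\frac{1}{2}}JD^{\frac{1}{2}}$ is the projection $P_0$, and then invoking the spectral decomposition exactly as you do with $f(\mathcal{L})=f(0)P_0$. You have in fact been more careful than the source: your observation that the converse only yields ``at most $s$'' distinct eigenvalues, so that one must either check each $\lambda_i$ is attained or read the $\lambda_i$ as the actual nonzero eigenvalues, is a genuine (if minor) imprecision in the lemma as stated that the paper passes over silently.
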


By Lemma \ref{lem-2-1}, we have the following result.
\begin{lem}\label{lem-2-4}
Let $G$ be a connected graph with $m$ edges and $\mathrm{Spec}_{\mathcal{L}}(G)=\{[\alpha]^{m_1},[\beta]^{m_2},[0]^1\}$ ($2\geq \alpha>\beta>0$). Then $\beta\leq 1$, and one of the following holds:
\begin{enumerate}[(i)]
\vspace{-0.2cm}
\item $\beta=1$, and each pair of non-adjacent vertices in $G$ have the same neighbors;
\vspace{-0.3cm}
\item $\beta<1$, and $-\frac{2m(\alpha-1)(\beta-1)}{\alpha\beta}\geq d_u-d_v\geq \frac{2m(\alpha-1)(\beta-1)}{\alpha\beta}$ holds for any two non-adjacent vertices $u,v$ in $G$.
\end{enumerate}
\end{lem}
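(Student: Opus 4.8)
The plan is to apply Lemma~\ref{lem-2-1} with $s=3$ and to extract local information from the resulting matrix identity. First, since $G$ has three distinct $\mathcal{L}$-eigenvalues it is not a complete graph, so Lemma~\ref{lem-2-2}(iv) gives $\beta=\lambda_{n-1}\leq 1$, which settles the first assertion. For the rest, put $A^{*}=D^{-\frac12}AD^{-\frac12}$, so that $\mathcal{L}=I-A^{*}$, and rewrite the identity of Lemma~\ref{lem-2-1},
\[
(\mathcal{L}-\alpha I)(\mathcal{L}-\beta I)=\frac{\alpha\beta}{2m}\,D^{\frac12}JD^{\frac12},
\]
in terms of $A^{*}$ as
\[
A^{*2}+(\alpha+\beta-2)A^{*}+(1-\alpha)(1-\beta)I=\frac{\alpha\beta}{2m}\,D^{\frac12}JD^{\frac12}.
\]

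Next I would compare entries on both sides, using $A^{*}_{uv}=a_{uv}/\sqrt{d_ud_v}$ and $(A^{*2})_{uv}=\sum_{w}a_{uw}a_{wv}/(\sqrt{d_ud_v}\,d_w)$. Looking at the $(u,u)$-entry (where $A^{*}_{uu}=0$) yields
\[
\sum_{w\in N_G(u)}\frac1{d_w}=\frac{\alpha\beta}{2m}\,d_u^{2}-(1-\alpha)(1-\beta)d_u ,
\]
and, for a pair of non-adjacent vertices $u,v$ (so $A^{*}_{uv}=0$), the $(u,v)$-entry yields
\[
\sum_{w\in N_G(u)\cap N_G(v)}\frac1{d_w}=\frac{\alpha\beta}{2m}\,d_ud_v .
\]
Subtracting these gives $\sum_{w\in N_G(u)\setminus N_G(v)}d_w^{-1}=\tfrac{\alpha\beta}{2m}d_u(d_u-d_v)-(1-\alpha)(1-\beta)d_u$. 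The left-hand side is a (possibly empty) sum of positive numbers, hence nonnegative; dividing by $d_u>0$ and rearranging (recall $\alpha\beta>0$) gives $d_u-d_v\geq \frac{2m(1-\alpha)(1-\beta)}{\alpha\beta}=\frac{2m(\alpha-1)(\beta-1)}{\alpha\beta}$, and interchanging $u$ and $v$ gives $d_u-d_v\leq -\frac{2m(\alpha-1)(\beta-1)}{\alpha\beta}$. This is exactly the two-sided bound in~(ii).

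It remains to split on the value of $\beta$. If $\beta<1$ we are in case~(ii) and done. If $\beta=1$, then $(1-\alpha)(1-\beta)=0$, so the bound just obtained forces $d_u=d_v$ for every pair of non-adjacent $u,v$, and then the subtraction above collapses to $\sum_{w\in N_G(u)\setminus N_G(v)}d_w^{-1}=0$, i.e. $N_G(u)\subseteq N_G(v)$; interchanging $u$ and $v$ gives $N_G(u)=N_G(v)$, which is case~(i). (Alternatively, for $\beta=1$ one can bypass the computation: $\lambda_{n-1}=1$ forces $G$ to be complete multipartite by Corollary~\ref{cor-2-0}, and two non-adjacent vertices of a complete multipartite graph lie in the same part and hence share their neighborhood.) I do not anticipate a serious obstacle; the only points requiring care are the entrywise expansion of $A^{*2}$ with its degree weights, the remark that $\alpha\beta>0$ (so clearing denominators introduces no sign change), and the observation that $\alpha>1$ by Lemma~\ref{lem-2-2}(v), which is what makes the bound in~(ii) non-vacuous.
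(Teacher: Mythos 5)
Your proof is correct and follows essentially the same route as the paper: apply Lemma~\ref{lem-2-1} with $s=3$, expand in terms of $D^{-\frac12}AD^{-\frac12}$, compare the $(u,u)$- and $(u,v)$-entries for non-adjacent $u,v$, and use the nonnegativity of $\sum_{w\in N_G(u)\setminus N_G(v)}d_w^{-1}$ to get the two-sided degree bound and, when $\beta=1$, the equality of neighborhoods. The only cosmetic differences are that you obtain $\beta\leq 1$ directly from Lemma~\ref{lem-2-2}(iv) rather than from the bound itself, and your parenthetical alternative for case (i) via Corollary~\ref{cor-2-0} is exactly what the paper records in Remark~\ref{rem-0}.
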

\begin{proof}
By the assumption, $\mathrm{Spec}_{\mathcal{L}}(G)=\{[\alpha]^{m_1},[\beta]^{m_2},[0]^1\}$,  where $2\geq\alpha>\beta>0$. By Lemma \ref{lem-2-1}, we have 
\begin{equation}\label{equ-0}
(\mathcal{L}-\alpha I)(\mathcal{L}-\beta I)=\frac{\alpha\beta}{2m}D^{\frac{1}{2}}JD^{\frac{1}{2}}.
\end{equation}
Since $\mathcal{L}=I-D^{-\frac{1}{2}}AD^{-\frac{1}{2}}$, (\ref{equ-0}) can be written as
\begin{equation}\label{equ-0-1}
\Big(D^{-\frac{1}{2}}AD^{-\frac{1}{2}}\Big)^2+(\alpha+\beta-2) D^{-\frac{1}{2}}AD^{-\frac{1}{2}}+(\alpha-1)(\beta-1)I=\frac{\alpha\beta}{2m}D^{\frac{1}{2}}JD^{\frac{1}{2}}.
\end{equation}
By considering the $(u,u)$-entry and $(u,v)$-entry ($u\neq v$) at both sides of (\ref{equ-0-1}), we obtain that
\begin{equation}\label{equ-3}
\sum_{w\sim u}\frac{1}{d_w}=\frac{\alpha\beta}{2m}d_u^2-(\alpha-1)(\beta-1)d_u,
\end{equation}
and
\begin{equation}\label{equ-4}
\sum_{\small\begin{smallmatrix}w\sim u\\ w\sim v\end{smallmatrix}}\frac{1}{d_w}=
\begin{cases}
\displaystyle-(\alpha+\beta-2)+\frac{\alpha\beta}{2m}d_ud_v&\mbox{if $u\sim v$},\\
\vspace{0.1cm}
\displaystyle\frac{\alpha\beta}{2m}d_ud_v &\mbox{if $u\nsim v$}.\\
\end{cases}
\end{equation}
Let $u,v$ be a pair of non-adjacent vertices in $G$. Then from (\ref{equ-3}) and (\ref{equ-4}) we deduce that 
\begin{equation}\label{equ-5}
\left\{
\begin{aligned}
\frac{\alpha\beta}{2m}d_u^2-(\alpha-1)(\beta-1)d_u=\sum_{w\sim u}\frac{1}{d_w}\geq \sum_{\small\begin{smallmatrix}w\sim u\\ w\sim v\end{smallmatrix}}\frac{1}{d_w}=\frac{\alpha\beta}{2m}d_ud_v,\\
\frac{\alpha\beta}{2m}d_v^2-(\alpha-1)(\beta-1)d_v=\sum_{w\sim v}\frac{1}{d_w}\geq \sum_{\small\begin{smallmatrix}w\sim u\\ w\sim v\end{smallmatrix}}\frac{1}{d_w}=\frac{\alpha\beta}{2m}d_ud_v.\\
\end{aligned}
\right.
\end{equation}
This implies that 
\begin{equation}\label{equ-0-3}
-\frac{2m(\alpha-1)(\beta-1)}{\alpha\beta}\geq d_u-d_v\geq \frac{2m(\alpha-1)(\beta-1)}{\alpha\beta}.
\end{equation}
Since $\alpha>1$ by Lemma \ref{lem-2-2} (v), we must have $\beta\leq 1$ by (\ref{equ-0-3}), which is consistent with Lemma \ref{lem-2-2} (iv). Particularly, if $\beta=1$, from (\ref{equ-0-3}) we may  conclude that $d_u=d_v$, and moreover,  $u$ and $v$ must share the same neighborhood in $G$ by (\ref{equ-5}). 

We complete the proof.
\end{proof}
\begin{remark}\label{rem-0}
\emph{It is worth mentioning that Lemma \ref{lem-2-4} (i) can be also  deduced from Corollary \ref{cor-2-0}.}
\end{remark}
At the end of this section, we characterize all connected graphs with exactly three distinct $\mathcal{L}$-eigenvalues of which one is equal to $1$.
\begin{thm}\label{thm-2-1}
Let $G$ be a connected graph of order $n\geq 3$. Then $G$ has exactly three distinct $\mathcal{L}$-eigenvalues of which one $\mathcal{L}$-eigenvalue is  $1$ if and only if $G\cong K_{s,n-s}$ with $1\leq s\leq n-1$, or $G\cong K_{n_1,n_2,\ldots,n_r}$ with $n_1=n_2=\cdots=n_r=\frac{n}{r}$  and $3\leq r\leq n-1$.
\end{thm}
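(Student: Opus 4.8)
The plan is to prove both implications. Sufficiency is a direct spectral computation, while necessity rests on Corollary~\ref{cor-2-0} (to force $G$ to be complete multipartite) and on the entrywise identities behind Lemma~\ref{lem-2-4} (to force the parts to be balanced once there are at least three of them). For sufficiency, if $G=K_{s,n-s}$ then each colour class is an independent set on which the neighbourhood is constant, so Lemma~\ref{lem-2-3}(i) gives $1$ as an $\mathcal{L}$-eigenvalue of multiplicity at least $(s-1)+(n-s-1)=n-2$; since also $0$ is simple and $2$ is an $\mathcal{L}$-eigenvalue by Lemma~\ref{lem-2-2}(viii) (as $G$ is a nontrivial connected bipartite graph), counting multiplicities shows $\mathrm{Spec}_{\mathcal{L}}(G)=\{[2]^{1},[1]^{n-2},[0]^{1}\}$, which has exactly three distinct values, one of them $1$ (recall $n\ge 3$). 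If $G=K_{a,\ldots,a}$ with $r$ parts and $n=ra$, then $G$ is $(n-a)$-regular, so $\mathcal{L}=I-\tfrac{1}{n-a}A$ and the $\mathcal{L}$-spectrum follows from the standard adjacency spectrum $\{[(r-1)a]^{1},[0]^{n-r},[-a]^{r-1}\}$ as $\{[\tfrac{r}{r-1}]^{r-1},[1]^{n-r},[0]^{1}\}$; for $3\le r\le n-1$ these are three distinct values with $n-r\ge 1$, so $1$ occurs.

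For necessity, suppose $G$ is connected of order $n\ge 3$ with exactly three distinct $\mathcal{L}$-eigenvalues, one equal to $1$. By Lemma~\ref{lem-2-2}(i),(vi) the eigenvalue $0$ is simple, and by Lemma~\ref{lem-2-2}(v) we have $\lambda_{1}\ge\frac{n}{n-1}>1$, so the largest $\mathcal{L}$-eigenvalue is not $1$; hence $\mathrm{Spec}_{\mathcal{L}}(G)=\{[\alpha]^{m_{1}},[1]^{m_{2}},[0]^{1}\}$ with $\alpha>1$ and $m_{1},m_{2}\ge 1$. In particular $\lambda_{n-1}=1$, and $G$ is not complete (a complete graph has only two distinct $\mathcal{L}$-eigenvalues), so Corollary~\ref{cor-2-0} forces $G\cong K_{n_{1},\ldots,n_{r}}$ for some $r\ge 2$ with not all $n_{i}$ equal to $1$. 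If $r=2$ then $G\cong K_{s,n-s}$ and we are done, so from now on assume $r\ge 3$.

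It remains to prove that $n_{1}=\cdots=n_{r}$. As in the proof of Lemma~\ref{lem-2-4} with $\beta=1$, the identities~(\ref{equ-3}) and~(\ref{equ-4}) hold. A vertex of part $i$ has degree $n-n_{i}$ and neighbourhood equal to the union of the other parts, while two adjacent vertices taken from parts $i\ne j$ have common neighbourhood equal to the union of the parts distinct from $i$ and $j$; substituting these facts, (\ref{equ-3}) becomes $\sum_{l\ne i}\frac{n_{l}}{n-n_{l}}=\frac{\alpha}{2m}(n-n_{i})^{2}$ for all $i$, and the case $u\sim v$ of (\ref{equ-4}) becomes $\sum_{l\ne i,j}\frac{n_{l}}{n-n_{l}}=-(\alpha-1)+\frac{\alpha}{2m}(n-n_{i})(n-n_{j})$ for $i\ne j$. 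Subtracting the latter from the former and using $\alpha>0$,
\[
(n-n_{i})(n_{j}-n_{i})=\frac{2m}{\alpha}\Big(\frac{n_{j}}{n-n_{j}}-(\alpha-1)\Big)\qquad(i\ne j),
\]
whose right-hand side depends only on $j$. Thus, for a fixed $j$ and any $i,i'\ne j$, the quadratic $x\mapsto(n-x)(n_{j}-x)$ (which is symmetric about $x=\tfrac{n+n_{j}}{2}$) takes the same value at $n_{i}$ and at $n_{i'}$, so $n_{i}=n_{i'}$ or $n_{i}+n_{i'}=n+n_{j}$. The second alternative is impossible, since $n_{i},n_{i'},n_{j}$ are the sizes of three distinct parts (this is where $r\ge 3$ enters), whence $n_{i}+n_{i'}\le n-n_{j}<n+n_{j}$. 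Hence all parts other than $j$ have the same size, and letting $j$ run over two distinct indices forces $n_{1}=\cdots=n_{r}=\tfrac{n}{r}$; finally $\tfrac{n}{r}\ge 2$ because $G$ is non-complete, so $3\le r\le n-1$, completing the proof.

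The main obstacle is this last step. Corollary~\ref{cor-2-0} disposes of the hypothesis ``one eigenvalue is $1$'' by reducing $G$ to a complete multipartite graph, but eliminating the unbalanced complete multipartite graphs with $r\ge 3$ parts requires the displayed quadratic relation among the part sizes, which in turn relies on translating the spectral condition into the entrywise identities~(\ref{equ-3})--(\ref{equ-4}) via Lemma~\ref{lem-2-1}. Everything else — the simplicity of $0$, the bound $\alpha>1$, and the two sufficiency computations — is routine.
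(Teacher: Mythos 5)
Your proof is correct, and while it follows the paper's overall skeleton (reduce to a complete multipartite graph, then exploit the entrywise identities (\ref{equ-3}) and (\ref{equ-4}) to balance the parts), the execution of the balancing step is genuinely different and, in my view, cleaner. The paper reaches ``complete multipartite'' via Lemma \ref{lem-2-4}(i) (non-adjacent vertices share neighbourhoods), whereas you invoke Corollary \ref{cor-2-0}; the paper itself notes in Remark \ref{rem-0} that these are interchangeable, so that is a cosmetic difference. The substantive divergence is in forcing $n_1=\cdots=n_r$ for $r\ge 3$: the paper splits into $r=3$ (a cyclic symmetric system of three equations) and $r\ge 4$ (a proof by contradiction that pins down $\tfrac{\alpha}{2m}$ in two incompatible ways and derives $n_i=0$), while you subtract the adjacent-pair case of (\ref{equ-4}) from (\ref{equ-3}) once, observe that the resulting expression $(n-n_i)(n_j-n_i)$ depends only on $j$, and use the symmetry of the quadratic $x\mapsto(n-x)(n_j-x)$ about $x=\tfrac{n+n_j}{2}$ together with $n_i+n_{i'}+n_j\le n$ to rule out the spurious solution. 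This handles all $r\ge 3$ uniformly, with no case split and no contradiction argument, at the cost of a slightly less mechanical observation. Your sufficiency computations (Lemma \ref{lem-2-3}(i) plus a multiplicity count for $K_{s,n-s}$, and the regular-graph reduction to the known adjacency spectrum for the balanced case) are routine and correct; the paper simply asserts these spectra.
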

\begin{proof}
By the assumption, we can suppose that $\mathrm{Spec}_{\mathcal{L}}(G)=\{[\alpha]^{m_1},[1]^{m_2},[0]^1\}$ ($\alpha>1$). By Lemma \ref{lem-2-4} (i),  each pair of non-adjacent vertices in $G$ share the same neighborhood. For any  $u\in V(G)$, let $[u]$ denote the set of vertices which are not adjacent to $u$ in $G$. Then each vertex in $[u]$ has the same neighborhood as $u$. Therefore,  $[u]$ forms an independent set of $G$ and all vertices in $[u]$ share the same neighborhood $V(G)\setminus [u]$.  Hence, by the arbitrariness of $u$, we may conclude that $G$ is a complete multipartite graph, say $G=K_{n_1,n_2,\ldots,n_r}$, where $n_1+n_2+\cdots+n_r=n$,  and $2\leq r\leq n-1$ because $G$ is connected and cannot be a complete graph (which has only two distinct $\mathcal{L}$-eigenvalues). If $r=2$, then $G$ is a complete bipartite graph and our result follows. Now assume that $3\leq r\leq n-1$. Let $V_i$ ($1\leq i\leq r$) be the $i$-th part of $V(G)$ with $|V_i|=n_i$. If $r=3$,  for any three vertices  $u\in V_1$, $v\in V_2$ and $w\in V_3$, we have  $u\sim v$, $u\sim w$ and $v\sim w$. By applying (\ref{equ-4}) to these three pairs of adjacent vertices, we get
 \begin{equation}\label{equ-5-0}
\left\{
\begin{aligned}
\frac{n_3}{n-n_3}=1-\alpha+\frac{\alpha}{2m}(n-n_1)(n-n_2)\\
\frac{n_2}{n-n_2}=1-\alpha+\frac{\alpha}{2m}(n-n_1)(n-n_3)\\
\frac{n_1}{n-n_1}=1-\alpha+\frac{\alpha}{2m}(n-n_2)(n-n_3)
\end{aligned}
\right.~\Rightarrow~
\left\{
\begin{aligned}
n_3-n_2=(1-\alpha)(n_2-n_3)\\
n_2-n_1=(1-\alpha)(n_1-n_2)
\end{aligned}
\right.
\end{equation}
which gives that $n_1=n_2=n_3$ because $\alpha\neq 2$ due to $G$ is not a bipartite graph. For $4\leq r\leq n-1$,  we have to deal with it  by the way of contradiction since we cannot obtain  a  similar symmetric relation as in (\ref{equ-5-0}). Assume that not all  $n_i$'s are equal, say $n_1\neq n_2$. Taking $u_1\in V_1$, $u_2\in V_2$ and $u_i\in V_i$ ($3\leq i\leq r$), then $u_1\sim u_i$ and $u_2\sim u_i$. By applying (\ref{equ-4}) to these two pairs of adjacent vertices, we get
 \begin{equation}\label{equ-5-1}
\left\{
\begin{aligned}
\sum_{l\neq 1,i}\frac{n_l}{n-n_l}=1-\alpha+\frac{\alpha}{2m}(n-n_1)(n-n_i)\\
\sum_{l\neq 2,i}\frac{n_l}{n-n_l}=1-\alpha+\frac{\alpha}{2m}(n-n_2)(n-n_i)\\
\end{aligned}
\right.~\Rightarrow~
\frac{n(n_1-n_2)}{(n-n_1)(n-n_2)}=\frac{\alpha}{2m}(n_1-n_2)(n-n_i).
\end{equation}
Since $n_1\neq n_2$, from (\ref{equ-5-1}) we have
\begin{equation}\label{equ-6}
\frac{\alpha}{2m}=\frac{n}{(n-n_1)(n-n_2)(n-n_i)}.
\end{equation} Thus we obtain $n_3=\cdots=n_r$ by the arbitrariness of $i$. Consequently, we see that $n_1\neq n_3$ or $n_2\neq n_3$, say $n_1\neq n_3$. By exchanging the roles of $n_2$ and $n_3$, similarly   as above arguments, we deduce that $n_2=n_4=\cdots=n_r$. Therefore, we have $n_2=n_3=\cdots=n_r$. Furthermore, putting $u=u_1\in V_1$ in (\ref{equ-3}), we obtain $\frac{\alpha}{2m}(n-n_1)^2=\frac{1}{n-n_2}\cdot(n-n_1)$, i.e.,
\begin{equation}\label{equ-6-0}
\frac{\alpha}{2m}=\frac{1}{(n-n_1)(n-n_2)}.
\end{equation}
Combining (\ref{equ-6}) and (\ref{equ-6-0}), we deduce that $n_i=0$ for $3\leq i\leq r$, which is impossible. Therefore, we have  $n_1=n_2=\cdots=n_r=\frac{n}{r}$, and our result follows.

Conversely, by simple computation we obtain that
\begin{equation}\label{equ-7}
\begin{cases}
\mathrm{Spec}_{\mathcal{L}}(K_{s,n-s})=\{[2]^1, [1]^{n-2}, [0]^1\},\\
\mathrm{Spec}_{\mathcal{L}}(K_{\frac{n}{r},\frac{n}{r},\ldots,\frac{n}{r}})=\{[\frac{r}{r-1}]^{r-1}, [1]^{n-r}, [0]^1\},\\
\end{cases}
\end{equation}
where $1\leq s\leq n-1$ and $3\leq r\leq n-1$. It follows our result.
\end{proof}
\begin{cor}\label{cor-0-1}
Let $G$ be a connected graph of order $n\geq 3$. Then $G$ has $\mathcal{L}$-spectrum $\mathrm{Spec}_{\mathcal{L}}(G)=\{[\alpha]^1,[\beta]^{n-2},[0]^1\}$  ($\alpha>\beta>0$) if and only if $\alpha=2$ and $\beta=1$ if and only if $G$ is a complete bipartite graph.
\end{cor}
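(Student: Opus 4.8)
The plan is to establish the chain of equivalences by first disposing of the two routine directions and then focusing on the one implication that carries content. The implication ``$G$ is a complete bipartite graph $\Rightarrow$ $\alpha=2$ and $\beta=1$'' is immediate from the first line of (\ref{equ-7}), which records $\mathrm{Spec}_{\mathcal{L}}(K_{s,n-s})=\{[2]^1,[1]^{n-2},[0]^1\}$; and ``$\alpha=2$, $\beta=1$ $\Rightarrow$ $\mathrm{Spec}_{\mathcal{L}}(G)=\{[\alpha]^1,[\beta]^{n-2},[0]^1\}$'' is vacuous. So it suffices to show that a connected graph $G$ on $n\ge 3$ vertices with $\mathrm{Spec}_{\mathcal{L}}(G)=\{[\alpha]^1,[\beta]^{n-2},[0]^1\}$ and $\alpha>\beta>0$ is complete bipartite. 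Observe first that such a $G$ has exactly three distinct $\mathcal{L}$-eigenvalues, hence $G\ne K_n$; being connected and non-complete it has a pair of non-adjacent vertices, and Lemma \ref{lem-2-2}(iv) gives $\beta=\lambda_{n-1}\le 1$.

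The key step is to rule out $\beta<1$, and for this I would repeat the inertia argument from the proof of Corollary \ref{cor-2-0}. Put $A^{*}=D^{-\frac{1}{2}}AD^{-\frac{1}{2}}=I-\mathcal{L}$. Then $A^{*}$ is congruent to $A$, so by Sylvester's law of inertia $A$ and $A^{*}$ have the same numbers of positive, negative and zero eigenvalues. The eigenvalues of $A^{*}$ are $1-\alpha$ (once), $1-\beta$ ($n-2$ times) and $1$ (once). Since $G$ is connected, $\alpha=\lambda_1\ge\frac{n}{n-1}>1$ by Lemma \ref{lem-2-2}(v), so $1-\alpha<0$; if in addition $\beta<1$ then $1-\beta>0$, and hence $A$ would have $n-1$ positive eigenvalues, exactly one negative eigenvalue, and no zero eigenvalue. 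But this cannot happen when $n\ge 3$: applying Cauchy interlacing to the $2\times 2$ zero principal submatrix of $A$ induced on a pair of non-adjacent vertices gives $\lambda_{n-1}(A)\le 0$, so at least two eigenvalues of $A$ are non-positive; together with ``exactly one negative eigenvalue'' this forces $\lambda_{n-1}(A)=0$, contradicting the nonsingularity of $A$. Therefore $\beta=1$.

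With $\beta=1$ in hand, the trace identity $\sum_i\lambda_i=n$ of Lemma \ref{lem-2-2}(ii) yields $\alpha+(n-2)=n$, so $\alpha=2$; then $\lambda_1(\mathcal{L})=2$ makes $G$ bipartite by Lemma \ref{lem-2-2}(viii), while $\lambda_{n-1}(\mathcal{L})=1$ makes $G$ complete multipartite by Corollary \ref{cor-2-0}, and a complete multipartite graph that is bipartite is exactly some $K_{s,n-s}$. (Alternatively, once $\beta=1$ one may apply Theorem \ref{thm-2-1} and discard the balanced $r$-partite graphs with $r\ge 3$, where the non-trivial $\mathcal{L}$-eigenvalue has multiplicity $r-1\ge 2$ while $m_1=1$.) The main obstacle is the middle paragraph: Lemma \ref{lem-2-4}(ii) only constrains degree differences and a bare eigenvalue count does not settle the case $\beta<1$, whereas the inertia/interlacing argument — in effect the statement that no connected graph on three or more vertices has a nonsingular adjacency matrix with a single negative eigenvalue — closes it cleanly.
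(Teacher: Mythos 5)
Your proof is correct, but the key step is handled quite differently from the paper, and more heavily than necessary. For the implication ``$\mathrm{Spec}_{\mathcal{L}}(G)=\{[\alpha]^1,[\beta]^{n-2},[0]^1\}\Rightarrow \alpha=2,\ \beta=1$'', the paper simply takes the trace: Lemma \ref{lem-2-2}(ii) gives $\alpha+(n-2)\beta=n$, and since $\alpha\le 2$ by Lemma \ref{lem-2-2}(viii) this forces $\beta=\frac{n-\alpha}{n-2}\ge 1$; combined with $\beta\le 1$ from Lemma \ref{lem-2-2}(iv) one gets $\beta=1$ and hence $\alpha=2$ in two lines. So your closing remark that ``a bare eigenvalue count does not settle the case $\beta<1$'' is mistaken --- the trace identity together with $\alpha\le 2$ is exactly what settles it, and this is where the hypothesis $m_1=1$ does its work. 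Your alternative route via Sylvester's law of inertia applied to $A^{*}=I-\mathcal{L}$ plus Cauchy interlacing on a $2\times 2$ zero principal submatrix is valid (it amounts to the observation that no connected graph on $n\ge 3$ vertices can have adjacency inertia $(n-1,0,1)$), and it has the mild virtue of not invoking the bound $\alpha\le 2$; but it imports interlacing, which the paper never uses, to replace a one-line computation. The endgame is essentially the same in both treatments: once $\alpha=2$ and $\beta=1$, the paper quotes Theorem \ref{thm-2-1} directly (the balanced multipartite case is excluded since its largest eigenvalue $\frac{r}{r-1}$ has multiplicity $r-1\ge 2$, or since $\frac{r}{r-1}<2$), while you reach $K_{s,n-s}$ through Lemma \ref{lem-2-2}(viii) and Corollary \ref{cor-2-0}; both are fine.
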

\begin{proof}
Suppose that $\mathrm{Spec}_{\mathcal{L}}(G)=\{[\alpha]^1,[\beta]^{n-2},[0]^1\}$. By considering the trace of $\mathcal{L}$, we have $\alpha+(n-2)\beta=n$, implying that $\beta=\frac{n-\alpha}{n-2}\geq 1$ due to $\alpha\leq 2$ by Lemma \ref{lem-2-2} (viii).  Again by Lemma \ref{lem-2-2} (iv), we get $\beta=1$, and so $\alpha=2$. Thus $G$ is a complete bipartite graph by Theorem \ref{thm-2-1}. Conversely, the $\mathcal{L}$-spectrum of a complete bipartite graph $G$ is  of the form  $\mathrm{Spec}_{\mathcal{L}}(G)=\{[\alpha]^1,[\beta]^{n-2},[0]^1\}$ by (\ref{equ-7}), as required. 
\end{proof}
\begin{remark}\label{rem-0-1}
\emph{Note that Corollary \ref{cor-0-1} has been obtained by Van Dam and Omidi \cite{Dam9}. In fact, they have determined all connected graphs with three distinct $\mathcal{L}$-eigenvalues of which two are simple.}
\end{remark}

\section{Bipartite graphs with   four distinct $\mathcal{L}$-eigenvalues}
In this section, we focus on connected bipartite graphs with four distinct $\mathcal{L}$-eigenvalues, and determine all such graphs with at least one vertex of degree $1$. First of all, we need some concepts and results coming from combinatorial design theory for later use.

A \emph{balanced incomplete block design} (\emph{BIBD} for short) is a pair $(V,\mathcal{B})$ where $V$ is a $v$-set and $\mathcal{B}$ is a collection of $b$ $k$-subsets (\emph{blocks}) of $V$ such that each element of $V$ is contained in exactly $r$ blocks,   and  each pair of elements of $V$ is simultaneously contained in $\lambda$ blocks (see \cite{Cvetkovic}). The integers $(v,b,r,k,\lambda)$ are called the \emph{parameters} of the BIBD $(V,\mathcal{B})$. The \emph{complement} of $(V,\mathcal{B})$ is $(V,\overline{\mathcal{B}})$, where $\overline{\mathcal{B}}=\{V\setminus B: B\in \mathcal{B}\}$. Clearly, $(V,\overline{\mathcal{B}})$ is a BIBD with paramenters $(v,b,b-r,v-k,b-2r+\lambda)$. In the case $r=k$ (and then $v=b$) the BIBD $(V,\mathcal{B})$ is called \emph{symmetric} with parameters $(v,k,\lambda)$. In particular, the complement of a symmetric BIBD with parameters $(v,k,\lambda)$ is also a symmetric BIBD, which has parameters $(v,v-k,v-2k+\lambda)$. 

Let  $(V,\mathcal{B})$ be a BIBD with parameters $(v,b,r,k,\lambda)$.  The \emph{incidence matrix} of  $(V,\mathcal{B})$ is a  $v\times b$ matrix $C=(c_{ij})$, in which $c_{ij}=1$ when the $i$-th element $v_i$ of $V$ occurs in the $j$-th block $B_j$ of $\mathcal{B}$ and $c_{ij}=0$ otherwise. The \emph{incidence graph} of  $(V,\mathcal{B})$ is the bipartite graph on  $b+v$ vertices with the bipartition $V\cup \mathcal{B}$ in which $v_i\in V$ and $B_j\in \mathcal{B}$ are adjacent if and only if $v_i\in B_j$. As   shown in \cite{Cvetkovic} (pp. 165--167),  the incidence graph has adjacency spectrum $\big\{[\sqrt{rk}]^1,[\sqrt{r-\lambda}]^{v-1},[0]^{b-v},$ $[-\sqrt{r-\lambda}]^{v-1},[-\sqrt{rk}]^1\big\}$. In particular, if $(V,\mathcal{B})$ is symmetric,  the incidence graph is a $k$-regular bipartite graph with adjacency spectrum 
\begin{equation}\label{equ-4-4}
\big\{[k]^1,[\sqrt{k-\lambda}]^{v-1},[-\sqrt{k-\lambda}]^{v-1},[-k]^1\big\}.
\end{equation}
Conversely, a connected regular bipartite graph with four distinct (adjacency) eigenvalues  is the incidence graph of a symmetric BIBD (see \cite{Brouwer}, Proposition 14.1.3).

A square matrix $H$ of order $n$ whose entries are $+1$ or $-1$ is called  a \emph{Hadamard matrix of order $n$} provided that its rows are pairwise orthogonal, in other words $HH^T=nI$. It is well known that a Hadamard matrix of order $n$  exists only if  $n=1$, $2$ or $4t$, where $t$ is a positive integer \cite{Seberry}.  Multiplying any row (column) of a Hadamard matrix by $-1$, or permuting rows (columns) of  a Hadamard matrix, the result is also a Hadamard matrix. Two Hadamard matrices are said to be \emph{equivalent} if one can be obtained from the other by a sequence these operations. It is easy to see that every Hadamard matrix is equivalent to a Hadamard matrix that has every element of its first row and column $+1$, which is called a \emph{normalized Hadamard matrix}. Clearly, in a normalized Hadamard matrix of order $4t$, every row (column) except the first contains $+1$ and $-1$ exactly $2t$ times each, and further, $+1$ (resp. $-1$) in any row (column) except the first overlap with $+1$ (resp. $-1$) in each other row (column) except the first exactly $t$ times each.

Assume that there exists a  Hadamard matrix $H$ of order $4t$. Without loss of generality, suppose that $H$ is normalized. Remove the first row and column of $H$ and replace every $-1$ in the resulting matrix by a $0$. The final  $(4t-1)\times(4t-1)$ matrix $C$ can be viewed as the incidence matrix of a symmetric BIBD with parameters $(4t-1,2t-1,t-1)$ by above arguments. Conversely, given a symmetric BIBD with parameters $(4t-1,2t-1,t-1)$, a Hadamard matrix could be constructed  by reversing above process. For this reason, a symmetric BIBD with parameters $(4t-1,2t-1,t-1)$ is called a \emph{Hadamard design of dimension $t$}. Therefore, a Hadamard matrix corresponds to a Hadamard design naturally. 

Now we begin to consider connected bipartite graphs with four distinct $\mathcal{L}$-eigenvalues. Suppose that $G$ is a connected bipartite graph on $n$ vertices  $m$ edges with the bipartition $V(G)=V_1\cup V_2$, where $|V_i|=n_i$ for $i=1,2$, and $n_1\leq n_2$. Then the adjacency matrix $A$ and the  diagonal degree matrix $D$ of $G$ can be respectively written as 
\begin{equation*}
A=
\left(\begin{matrix}
0& B\\
B^T& 0\\
\end{matrix}\right)
\begin{matrix}
V_1\\
V_2\\
\end{matrix}
~~\mbox{and}~~D=\left(\begin{matrix}D_1&0\\ 0& D_2\end{matrix}\right),
\end{equation*}
where $D_i$ corresponds to $V_i$ for $i=1,2$. The $\mathcal{L}$-matrix of $G$ is of the form
\begin{equation*}
\mathcal{L}=I-D^{-\frac{1}{2}}AD^{-\frac{1}{2}}=I-\left(\begin{matrix} 0& D_1^{-\frac{1}{2}}BD_2^{-\frac{1}{2}}\\ D_2^{-\frac{1}{2}}B^TD_1^{-\frac{1}{2}} &0\end{matrix}\right)=I-\left(\begin{matrix} 0& B^*\\ B^{*T} &0\end{matrix}\right)=I-A^*,
\end{equation*}
where $B^{*}=D_1^{-\frac{1}{2}}BD_2^{-\frac{1}{2}}$ and $A^*=\left(\begin{matrix} 0& B^{*}\\ B^{*T} &0\end{matrix}\right)$. Then $A^{*2}=\left(\begin{matrix} B^{*}B^{*T}& 0\\ 0& B^{*T}B^{*}\end{matrix}\right)$ and $|\xi I-A^{*2}|=|\xi I_{n_1}-B^{*}B^{*T}|\cdot|\xi I_{n_2}- B^{*T}B^{*}|=\xi^{n_2-n_1}|\xi I_{n_1}-B^{*}B^{*T}|^2$. Assume that $\xi_1,\ldots,\xi_{n_1}$ are all the eigenvalues of $B^{*}B^{*T}$. Note that if $\xi$ is an eigenvalue of $B^{*}B^{*T}$ then $\pm\sqrt{\xi}$ must be eigenvalues of $A^{*}$. Then all the eigenvalues of $A^*$ are given by $\pm\sqrt{\xi_1},\ldots,\pm\sqrt{\xi_{n_1}}$, and $(n_2-n_1)$'s  $0$.  Therefore, we have $\mathrm{Spec}_{\mathcal{L}}(G)=\{[1\pm \sqrt{\xi_1}],\ldots,[1\pm \sqrt{\xi_{n_1}}],[1]^{n_2-n_1}\}$, which implies that $n_1=n_2$ if $G$ has  an even number of distinct $\mathcal{L}$-eigenvalues.

Now we are in a position to prove the main result of this section. 
\begin{thm}\label{thm-4-1}
Let $G$ a connected bipartite graph with at least one vertex of degree $1$. Then $G$ has exactly four distinct $\mathcal{L}$-eigenvalues if and only if $G$ is the graph obtained from $G'\cup K_2$ by joining one vertex of $K_2$ to one part of $G'$, where $G'=K_2$, or $G'$  is the incidence graph of the complement of a Hadamard design.
\end{thm}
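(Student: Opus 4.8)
The plan is to treat the two implications separately, in both directions working with the matrix $B^{*}B^{*T}$ attached to the bipartition exactly as set up just before the statement. Recall from that discussion that a connected bipartite graph with $|V_1|=|V_2|$ has an even number of distinct $\mathcal L$-eigenvalues, and that $\mathrm{Spec}_{\mathcal L}(G)$ is read off from the spectrum of $B^{*}B^{*T}$. So if $G$ has exactly four distinct $\mathcal L$-eigenvalues, then by Lemma~\ref{lem-2-2} (connectedness makes $0$ and, via part (ix), $2$ simple, and part (ix) also rules out $1$ as an eigenvalue) we must have $\mathrm{Spec}_{\mathcal L}(G)=\{[2]^1,[1+r]^{p},[1-r]^{p},[0]^1\}$ with $0<r<1$, $|V_1|=|V_2|=p+1$, $n=2p+2$, and $B^{*}B^{*T}$ has exactly the eigenvalues $1$ (simple) and $r^{2}$ (multiplicity $p$). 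Since $B^{*}B^{*T}\big(D_1^{1/2}\mathbf j\big)=D_1^{1/2}\mathbf j$ and $\|D_1^{1/2}\mathbf j\|^{2}=\sum_{i\in V_1}d_i=m$ (the number of edges), this is equivalent to the single matrix identity
\[
B^{*}B^{*T}=r^{2}I+\tfrac{1-r^{2}}{m}\,D_1^{1/2}\mathbf j\mathbf j^{T}D_1^{1/2}.
\]

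For the ``if'' direction I would take $G'$ to be the incidence graph of a symmetric $(4t-1,2t,t)$-design (the complement of a Hadamard design of dimension $t$), with $p\times p$ biadjacency matrix $N$ satisfying $NN^{T}=tI+tJ$ and $N\mathbf j=2t\mathbf j$. Writing $G$ with parts $\{v\}\cup W$ and $\{u\}\cup W'$ so that its biadjacency matrix is $\left(\begin{smallmatrix}1&\mathbf 0^{T}\\ \mathbf j&N\end{smallmatrix}\right)$ and its degrees are $d_v=1$, $d_u=4t$, $d_w=2t+1$, $d_{w'}=2t$, a block computation gives $B^{*}B^{*T}=\left(\begin{smallmatrix}\frac1{4t}&\frac1{4t\sqrt{2t+1}}\mathbf j^{T}\\ \frac1{4t\sqrt{2t+1}}\mathbf j&\frac1{2(2t+1)}I+\frac1{4t}J\end{smallmatrix}\right)$, which acts as $\frac1{2(2t+1)}I$ on the $\mathbf j^{\perp}$-part of the $W$-block and has trace $1+\frac1{2(2t+1)}$ and determinant $\frac1{2(2t+1)}$ on the span of $(1,\mathbf 0)$ and $(0,\mathbf j)$, hence eigenvalues $1$ (simple) and $r^{2}=\frac1{2(2t+1)}$ (multiplicity $4t-1$). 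By the reduction above this gives $\mathrm{Spec}_{\mathcal L}(G)=\{[2]^1,[1+r]^{4t-1},[1-r]^{4t-1},[0]^1\}$, four distinct values; the remaining case $G'=K_2$, i.e. $G=P_4$ with $\mathrm{Spec}_{\mathcal L}=\{0,\tfrac12,\tfrac32,2\}$, is a direct check (and is the $p=1$ specialization of the same computation).

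For the ``only if'' direction I would let $v$ be a vertex of degree $1$ with neighbour $u$, say $v\in V_1$, and exploit the identity above entrywise. The $(v,v)$-entry gives $\frac1{d_u}=r^{2}+\frac{1-r^{2}}{m}$; the $(v,j)$-entries for $j\in V_1\setminus\{v\}$ force $u\sim j$ and $d_j$ constant, say $d_j=\delta$, so $u$ is adjacent to all of $V_1$ and $d_u=p+1$. Setting $W=V_1\setminus\{v\}$, $W'=V_2\setminus\{u\}$ and $H=G[W\cup W']$, the diagonal $W'$-entries of $B^{*T}B^{*}$ together with an edge count show $H$ is a $k$-regular bipartite graph with $k=\delta-1$, equal parts of size $p$, and $m=p(k+1)+1$, while the off-diagonal $W$-entries of $B^{*}B^{*T}$ show that any two vertices of $W$ share a constant number $\lambda'$ of neighbours in $H$, i.e. $H_0H_0^{T}=(k-\lambda')I+\lambda' J$ for the biadjacency matrix $H_0$ of $H$. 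This keeps the degree eigenvalue $k$ of $H$ simple, so $H$ is connected; if $p=1$ then $H=K_2$, and if $p\ge2$ then $H$ is a connected $k$-regular bipartite graph with the four distinct adjacency eigenvalues $\pm k,\pm\sqrt{k-\lambda'}$, hence by the stated converse of (\ref{equ-4-4}) the incidence graph of a symmetric $(p,k,\lambda')$-design. To pin down the parameters I would combine $\frac1{d_u}=r^{2}+\frac{1-r^{2}}{m}$ with $m=p(k+1)+1$ to get $r^{2}=\frac{k}{(p+1)(k+1)}$, and subtract the off-diagonal from the diagonal $W$-entry to get $r^{2}=\frac{k-\lambda'}{k(k+1)}$; equating these yields $\lambda'(p+1)=k(p+1-k)$, and adding the standard symmetric-design identity $\lambda'(p-1)=k(k-1)$ gives $2\lambda' p=kp$, so $\lambda'=k/2$ and then $p=2k-1$. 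Thus $k=2t$, $\lambda'=t$, $p=4t-1$, the design has parameters $(4t-1,2t,t)$ with complement the Hadamard design $(4t-1,2t-1,t-1)$, and $G$ is $G'\cup K_2$ with $u$ joined to the part $W$ of $G'=H\in\{K_2,\text{incidence graph of the complement of a Hadamard design}\}$, exactly as claimed.

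I expect the main obstacle to lie in the ``only if'' direction: extracting from the single matrix identity the structural facts that $u$ dominates $V_1$, that $H$ is regular, and above all that $H_0H_0^{T}$ has the form $aI+bJ$ — this last step needs a careful accounting of precisely which neighbours a pair of $W$-vertices share and of the degrees of those neighbours (the presence or absence of $u$ and $v$ among them matters). After that the parameter elimination is short, but one still has to check that the degenerate possibilities $p=1$ and $k=\lambda'$ (which would make $H=K_{p,p}$) both collapse to the single exceptional case $G'=K_2$.
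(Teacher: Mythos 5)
Your argument is correct and follows essentially the same route as the paper: your identity $B^{*}B^{*T}=r^{2}I+\frac{1-r^{2}}{m}D_1^{1/2}\mathbf j\mathbf j^{T}D_1^{1/2}$ is exactly the $V_1$-block of the paper's equation (\ref{equ-4-1}) with $r=1-\alpha$, and the entrywise analysis at the degree-$1$ vertex, the identification of the residual graph as a symmetric design, and the parameter elimination all match the paper's proof. The only (cosmetic) differences are that you eliminate parameters via the design identity $\lambda'(p-1)=k(k-1)$ where the paper computes the degrees $\frac{n+4}{4}$, $\frac{n}{4}$ and $m=\frac{n^2+2n}{8}$ directly from an edge count, and for the converse you diagonalize $B^{*}B^{*T}$ in block form where the paper exhibits explicit $\mathcal{L}$-eigenvectors.
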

\begin{proof}
Let $G$ be a connected bipartite graph on $n$ vertices $m$ edges with at least one vertex of degree $1$. Suppose that $V(G)=V_1\cup V_2$ is the bipartition of $G$, where $|V_i|=n_i$ for $i=1,2$. If $G$ has  four distinct $\mathcal{L}$-eigenvalues,  we must have $n_1=n_2=\frac{n}{2}$ by above arguments, and furthermore, we can assume that $\mathrm{Spec}_{\mathcal{L}}(G)=\{[2]^1,[2-\alpha]^{\frac{n-2}{2}},[\alpha]^{\frac{n-2}{2}},[0]^1\}$ ($0<\alpha<1$) by Lemma \ref{lem-2-2}. Set
\begin{equation*}
\mathbf{x}_1=\frac{1}{\sqrt{2m}}D^{\frac{1}{2}}\mathbf{j}_n=\frac{1}{\sqrt{2m}}\left(
\begin{matrix}
D_1^{\frac{1}{2}}\mathbf{j}_{\frac{n}{2}}\\
D_2^{\frac{1}{2}}\mathbf{j}_{\frac{n}{2}}
\end{matrix} \right)
\mbox{ and } \mathbf{x}_2=\frac{1}{\sqrt{2m}}\left(
\begin{matrix}
D_1^{\frac{1}{2}}\mathbf{j}_{\frac{n}{2}}\\
-D_2^{\frac{1}{2}}\mathbf{j}_{\frac{n}{2}}
\end{matrix} \right),
\end{equation*}
one can easily verify that $\mathbf{x}_1$ and $\mathbf{x}_2$ are the eigenvectors of $G$ corresponding to the $\mathcal{L}$-eigenvalues $0$ and $2$, respectively. Let $f(x)=(x-\alpha)(x-(2-\alpha))$. By using the spectral decomposition of $f(\mathcal{L})$, we obtain 
\begin{equation*}
f(\mathcal{L})=(\mathcal{L}-\alpha I)(\mathcal{L}-(2-\alpha)I)=\alpha(2-\alpha)(\mathbf{x}_1\mathbf{x}_1^T+\mathbf{x}_2\mathbf{x}_2^T),
\end{equation*}
that is,
\begin{equation}\label{equ-4-1}
\bigg(D^{-\frac{1}{2}}AD^{-\frac{1}{2}}\bigg)^2-(1-\alpha)^2I=\frac{\alpha(2-\alpha)}{m}
\left(\begin{matrix}
D_1^{\frac{1}{2}}J_{\frac{n}{2}}D_1^{\frac{1}{2}}& 0\\
0&D_2^{\frac{1}{2}}J_{\frac{n}{2}}D_2^{\frac{1}{2}}\\
\end{matrix}\right).
\end{equation}
By considering the $(u,u)$-entry and $(u,v)$-entry ($u,v\in V_i$ and $u\neq v$) at both sides of (\ref{equ-4-1}), we have
\begin{equation}\label{equ-4-2}
\sum_{w\sim u}\frac{1}{d_w}=(1-\alpha)^2d_u+\frac{\alpha(2-\alpha)}{m}d_u^2,
\end{equation}
and 
\begin{equation}\label{equ-4-3}
\sum_{\begin{smallmatrix}w\sim u\\
w\sim v\end{smallmatrix}}\frac{1}{d_w}=\frac{\alpha(2-\alpha)}{m}d_ud_v~\mbox{ for $u,v\in V_i$, where $i=1,2$}.
\end{equation}
Suppose that $u_0$ is a vertex of $G$ with degree $1$. Assume that $u_0\in V_1$, and  let $v_0\in V_2$ be the unique neighbor of $u_0$. For any $u\in V_1\setminus\{u_0\}$, from (\ref{equ-4-3}) we know that 
\begin{equation}\label{equ-4-7}
\sum_{\begin{smallmatrix}w\sim u\\
w\sim u_0\end{smallmatrix}}\frac{1}{d_w}=\frac{\alpha(2-\alpha)}{m}d_ud_{u_0}>0,
\end{equation}
implying that  $u\sim v_0$, and $v_0$ is the unique common neighbor of  $u$ and $u_0$  due to $d_{u_0}=1$. Thus $v_0$ is adjacent to all vertices in $V_1$ by the arbitrariness of $u$, that is, $d_{v_0}=\frac{n}{2}$, and furthermore,  from (\ref{equ-4-3}) we have 
\begin{equation}\label{equ-4-8}
\frac{2}{n}=\frac{1}{d_{v_0}}=\sum_{\begin{smallmatrix}w\sim u\\
w\sim u_0\end{smallmatrix}}\frac{1}{d_w}=\frac{\alpha(2-\alpha)}{m}d_u=\frac{1-(1-\alpha)^2}{m} d_u~\mbox{for any $u\in V_1\setminus\{u_0\}$}.
\end{equation}
 In addition, putting $u=u_0$ in (\ref{equ-4-2}) and noting that $v_0$ is the unique neighbor of $u_0$, we obtain
\begin{equation}\label{equ-4-9}
\frac{2}{n}=(1-\alpha)^2+\frac{\alpha(2-\alpha)}{m}, \mbox{ i.e., } (1-\alpha)^2=\frac{2m-n}{n(m-1)}.
\end{equation}
Combining (\ref{equ-4-8}) and (\ref{equ-4-9}), we deduce that  $d_u=\frac{2m-2}{n-2}$ for any $u\in V_1\setminus\{u_0\}$. For any $v\in V_2\setminus\{v_0\}$, we see that all neighbors of $v$ share the same degree $\frac{2m-2}{n-2}$. Then from (\ref{equ-4-2}) we get
\begin{equation}\label{equ-4-10}
\frac{n-2}{2m-2}\cdot d_v=\sum_{w\sim v}\frac{1}{d_w}=(1-\alpha)^2d_v+\frac{1-(1-\alpha)^2}{m}d_v^2.
\end{equation}
Combining (\ref{equ-4-9}) and (\ref{equ-4-10}), one can easily deduce that $d_v=\frac{n^2-4m}{2n-4}$. Since the sum of all degrees of vertices in $V_2$ is equal to  the number of edges of $G$, we have $\frac{n}{2}+\frac{n^2-4m}{2n-4}\cdot(\frac{n}{2}-1)=m$, which implies that $m=\frac{n^2+2n}{8}$. Therefore, we get $\alpha=1-\sqrt{\frac{2}{n+4}}$ by (\ref{equ-4-9}), and $d_u=\frac{n+4}{4}$ for any $u\in V_1\setminus\{u_0\}$, $d_v=\frac{n}{4}$ for any $v\in V_2\setminus\{v_0\}$. If $n=4$, it is obvious that $G=P_4$, which has $\mathcal{L}$-spectrum $\mathrm{Spec}_{\mathcal{L}}(P_4)=\{[2],[1.5],[0.5],[0]\}$, as required. If $n>4$, then $n\geq 6$ due to $n$ is even, and so $|V_1\setminus{u_0}|=|V_2\setminus{v_0}|\geq 2$.  Then, for any two vertices $u_1,u_2\in V_1\setminus\{u_0\}$, from (\ref{equ-4-3}) we obtain
\begin{equation*}
\frac{2}{n}+\big|[N_{G}(u_1)\cap N_G(u_2)]\setminus\{v_0\}\big|\cdot\frac{4}{n}=\sum_{\begin{smallmatrix}w\sim u_1\\
w\sim u_2\end{smallmatrix}}\frac{1}{d_w}=\frac{\alpha(2-\alpha)}{m}d_{u_1}d_{u_2}=\frac{n+4}{2n},
\end{equation*}
which gives that $\big|[N_{G}(u_1)\cap N_G(u_2)]\setminus\{v_0\}\big|=\frac{n}{8}$. Similarly, for  $v_1,v_2\in V_2\setminus\{v_0\}$, we have
\begin{equation*}
\big|N_{G}(v_1)\cap N_G(v_2)\big|\cdot\frac{4}{n+4}=\sum_{\begin{smallmatrix}w\sim v_1\\
w\sim v_2\end{smallmatrix}}\frac{1}{d_w}=\frac{\alpha(2-\alpha)}{m}d_{v_1}d_{v_2}=\frac{n}{2(n+4)},
\end{equation*}
so $\big|N_{G}(v_1)\cap N_G(v_2)\big|=\frac{n}{8}$. Set $V_1'=V_1\setminus\{u_0\}$, $V_2'=V_2\setminus\{v_0\}$ and $G'=G[V_1'\cup V_2']$, the induced subgraph of $G$ on $V_1'\cup V_2'$. Then $G$ is just the graph obtained from $G'\cup K_2$ by joining one vertex of $K_2$ to all the vertices in $V_1'$ of $G'$. By above arguments, we see that  $G'$   is a $\frac{n}{4}$-regular bipartite graph (with the bipartition $V(G')=V_1'\cup V_2'$) on $n-2$ vertices in which each pair of vertices in $V_1'$ (resp. $V_2'$) have $\frac{n}{8}$ common neighbors in $V_2'$ (resp. $V_1'$).  Therefore, we claim that $G'$ is the incidence graph of a symmetric BIBD with parameters $(4t-1,2t,t)$ if we put $n=8t$. Clearly, such a symmetric BIBD is the complement of a symmetric BIBD with parameters $(4t-1,2t-1,t-1)$, which is known as the Hadamard design of dimension $t$. 
 
 Conversely, if $G'=K_2$, by the assumption, we have $G=P_4$, which has exactly four distinct $\mathcal{L}$-eigenvalues.  Now assume that $G'$  is the incidence graph (with the bipartition $V(G')=V_1'\cup V_2'$)  of the complement of a Hadamard design of dimension $t$. In other words, $G'$ is the incidence graph of a symmetric BIBD  with parameters $(4t-1,2t,t)$. Recall that $G$ is the graph obtained from $G'\cup K_2$ ($V(K_2)=\{u_0,v_0\}$) by joining the vertex $v_0$ of $K_2$ to all the vertices in $V_1'$  of $G'$.  We will show that $G$ has exactly four distinct $\mathcal{L}$-eigenvalues. 
 Suppose $A(G')=\left(\begin{matrix}0&B'\\B^{'T}&0\end{matrix}\right)$. Then the $\mathcal{L}$-matrix of $G$ can be written as 
 \begin{equation*}
 \mathcal{L}=I_{8t}-
\left(\begin{matrix}
0&\sqrt{1/4t}&0&0\\
\sqrt{1/4t}&0&\sqrt{1/(8t^2+4t)}\mathbf{j}_{4t-1}^T&0\\
0&\sqrt{1/(8t^2+4t)}\mathbf{j}_{4t-1}&0&\sqrt{1/(4t^2+2t)}B'\\
0&0&\sqrt{1/(4t^2+2t)}B'^T&0
\end{matrix}\right)
\begin{matrix}
u_0\\
\vspace{0.05cm}
v_0\\
\vspace{0.05cm}
V_1'\\
\vspace{0.05cm}
V_2'
\end{matrix}.
\end{equation*}
By the arguments at the beginning  of  this section and (\ref{equ-4-4}), we know that $G'$ is a $2t$-regular bipartite graph on $8t-2$ vertices with  adjacency spectrum  
\begin{equation*}
\mathrm{Spec}_{A}(G')=\Big\{[2t]^1,\Big[\sqrt{t}\Big]^{4t-2},\Big[-\sqrt{t}\Big]^{4t-2},[-2t]^1\Big\}.
\end{equation*}
Note that the vectors $\mathbf{y}_0=\mathbf{j}_{8t-2}=(\mathbf{j}_{4t-1}^T,\mathbf{j}_{4t-1}^T)^T$ and $\mathbf{y}_0'=(\mathbf{j}_{4t-1}^T,-\mathbf{j}_{4t-1}^T)^T$ are the eigenvectors of $G'$ with respect to the (adjacency) eigenvalues $2t$ and $-2t$, respectively. Suppose that $\mathbf{y_i}=(\mathbf{z}_i^T,\mathbf{w}_i^T)^T$ and $\mathbf{y_i}'=(\mathbf{z}_i^T,-\mathbf{w}_i^T)^T$ ($1\leq i\leq 4t-2$) are all the orthonormal eigenvectors of $G'$ with respect to the (adjacency) eigenvalues $\sqrt{t}$ and $-\sqrt{t}$, respectively. Then $B'\mathbf{w}_i=\sqrt{t}\mathbf{z}_i$ and $B'^T\mathbf{z}_i=\sqrt{t}\mathbf{w}_i$ for $1\leq i\leq 4t-2$. Also, for each $i$, we have $\mathbf{z}_i^T\mathbf{j}_{4t-1}=0$ and $\mathbf{w}_i^T\mathbf{j}_{4t-1}=0$ due to $\mathbf{y}_i^T\mathbf{y}_0=0$ and $\mathbf{y}_i^T\mathbf{y}_0'=0$. 

Taking $\mathbf{x}_0=D^{\frac{1}{2}}\mathbf{j}_{8t}=(1,2\sqrt{t},\sqrt{2t+1}\mathbf{j}_{4t-1}^T,\sqrt{2t}\mathbf{j}_{4t-1}^T)^T$ and $\mathbf{x}_0'=(1,-2\sqrt{t},\sqrt{2t+1}\mathbf{j}_{4t-1}^T,$ $-\sqrt{2t}\mathbf{j}_{4t-1}^T)^T$, one can easily verify that $\mathcal{L}\mathbf{x}_0=0\cdot \mathbf{x}_0$ and $\mathcal{L}\mathbf{x}_0'=2\cdot \mathbf{x}_0'$ due to $B'\mathbf{j}_{4t-1}=B'^T\mathbf{j}_{4t-1}=2t\cdot\mathbf{j}_{4t-1}$. Furthermore, suppose $\mathbf{x}_i=(0,0,\mathbf{z}_i^T,\mathbf{w}_i^T)^T$, $\mathbf{x}_i'=(0,0,\mathbf{z}_i^T,-\mathbf{w}_i^T)^T$ for $1\leq i\leq 4t-2$, and $\mathbf{x}_{4t-1}=(1,\sqrt{\frac{2t}{2t+1}},-\frac{1}{(4t-1)\sqrt{2t+1}}\mathbf{j}_{4t-1}^T,-\frac{2\sqrt{t}}{(4t-1)\sqrt{2t+1}}\mathbf{j}_{4t-1}^T)^T$, $\mathbf{x}_{4t-1}'=(1,-\sqrt{\frac{2t}{2t+1}},$ $-\frac{1}{(4t-1)\sqrt{2t+1}}\mathbf{j}_{4t-1}^T,\frac{2\sqrt{t}}{(4t-1)\sqrt{2t+1}}\mathbf{j}_{4t-1}^T)^T$.  Since $\mathbf{z}_i^T\mathbf{j}_{4t-1}=0$, $\mathbf{w}_i^T\mathbf{j}_{4t-1}=0$,   and $B'\mathbf{j}_{4t-1}=B'^T\mathbf{j}_{4t-1}=2t\cdot\mathbf{j}_{4t-1}$, one can also verify that $\mathcal{L}\mathbf{x}_i=(1-\sqrt{1/(4t+2)})\mathbf{x}_i$ and $\mathcal{L}\mathbf{x}_i'=(1+\sqrt{1/(4t+2)})\mathbf{x}_i'$ holds for each $1\leq i\leq 4t-1$.  Since $\mathbf{x}_1,\mathbf{x}_1',\ldots,\mathbf{x}_{4t-1},\mathbf{x}_{4t-1}'$ are pairwise orthogonal, we conclude that both $1-\sqrt{1/(4t+2)}$ and $1+\sqrt{1/(4t+2)}$ are the $\mathcal{L}$-eigenvalues of $G$ with multiplicities at least $4t-1$. As $(4t-1)\cdot2+2=8t$, which equals to order of $G$, we have 
\begin{equation*}
\mathrm{Spec}_{\mathcal{L}}(G)=\Big\{[2]^1,\Big[1+\sqrt{1/(4t+2)}\Big]^{4t-1},\Big[1-\sqrt{1/(4t+2)}\Big]^{4t-1},[0]^1\Big\},
\end{equation*}
and our result follows.
\end{proof}

Let  $\mathcal{G}$ denote the set of connected bipartite graphs with at least one vertex of degree $1$ having four distinct $\mathcal{L}$-eigenvalues. According to Theorem \ref{thm-4-1}, each  graph (except $P_4$) in $\mathcal{G}$ is of order  $n=8t$ for some positive integer $t$ and corresponds to a Hadamard  design of dimension $t$, or equivalently, a Hadamard matrix of order $4t$. In the following, we list some examples on constructing Hadamard matrices.

The \emph{Kronecker product} $M\otimes N$ of matrices $M=(m_{ij})_{a\times b}$ and $N=(n_{ij})_{c\times d}$ is the $ac\times bd$ matrix obtained from $M$ by replacing each element $m_{ij}$ with the block $m_{ij}N$. 
\begin{example}(Sylvester's Construction, see \cite{Sylvester}) 
\emph{Assume that $H_1$ and $H_2$ are two Hadamard matrices of order $m$ and $n$, respectively.  Then $H_1H_1^T=mI_m$ and $H_2H_2^T=nI_n$. Therefore, $(H_1\otimes H_2)(H_1\otimes H_2)^T=(H_1\otimes H_2)(H_1^T\otimes H_2^T)=(H_1H_1^T)\otimes(H_2H_2^T)=mI_m\otimes nI_n=mnI_{mn}$, which implies that $H_1\otimes H_2$ is a Hadamard matrix of order $mn$. Let $H$ be the Hadamard matrix of order $2$ given by}
\begin{equation*}
H=\left(
\begin{matrix}
1&1\\
1&-1\\
\end{matrix}
\right).
\end{equation*}
\emph{Putting $H_1=H$ and $H_i=H\otimes H_{i-1}$ for $i\geq 2$. We see that $H_i$ ($i\geq 2$) is also a Hadamard matrix, which has  order $2^i=4\cdot 2^{i-2}$. Therefore, by Theorem \ref{thm-4-1}, there exists a connected bipartite graph belonging to $\mathcal{G}$ of order $8\cdot 2^{i-2}=2^{i+1}$ for each $i\geq 2$.}
\end{example}

\begin{example}(Paley's Constructions, see \cite{Paley}, or \cite{Hedayat}, Theorems 3.2--3.3) 
\emph{Firstly, let $p^\alpha$ be a prime power such that $p^\alpha\equiv 3~(\mathrm{mod}~4)$, and let  $a_0,a_1,\ldots,$ $a_{p^\alpha-1}$  be all the elements of the finite field $GF(p^\alpha)$. Suppose that $C=(c_{ij})$ is the matrix of order $p^\alpha$ defined as follows:
\begin{equation*}
c_{ij}=\left\{
\begin{array}{ll}
0&\mbox{if $a_i=a_j$,}\\
1&\mbox{if $a_i-a_j=a^2$ for some $a\in GF(p^\alpha)\setminus\{0\}$,}\\
-1&\mbox{if $a_i-a_j$ is not a square in $GF(p^\alpha)$.}\\
\end{array}
\right.
\end{equation*}
Putting 
\begin{equation*}
S_1=\left(
\begin{matrix}
0&\mathbf{j}^T\\
-\mathbf{j}&C\\
\end{matrix}
\right)~~\mbox{and}~~H_1=I+S_1.
\end{equation*}
Then $H_1$ is  a Hadamard matrix of order $p^\alpha+1$ ($=4t_1$). Next, let $p^\beta$ be a prime power such that $p^\beta\equiv 1~(\mathrm{mod}~4)$. A matrix $C$ could be constructed as above. Putting 
\begin{equation*}
S_2=\left(
\begin{matrix}
0&\mathbf{j}^T\\
-\mathbf{j}&C\\
\end{matrix}
\right)~~\mbox{and}~~H_2=S_2\otimes \left(\begin{matrix}1&1\\1&-1\\\end{matrix}\right)+I\otimes \left(\begin{matrix}1&-1\\-1&-1\\\end{matrix}\right),
\end{equation*} 
$H_2$ is  a Hadamard matrix of order $2(p^\beta+1)$ ($=4t_2$). 
Therefore, by Theorem \ref{thm-4-1}, there exists a connected bipartite graph belonging to $\mathcal{G}$ of order $2(p^\alpha +1)=8t_1$ when $p^\alpha\equiv 3~(\mathrm{mod}~4)$, and of order $4(p^\beta+1)=8t_2$ when $p^\beta\equiv 1~(\mathrm{mod}~4)$.}
\end{example}

In addition, Wallis in \cite{Wallis} proved that, if $q$ is an odd natural number, there exists a Hadamard matrix  of order $2^sq$ for each natural number $s\geq [2\log_2(q-3)]$. Therefore, given any odd natural number  $q$, there exists a connected bipartite graph belonging to $\mathcal{G}$ of order $2^{s+1}q$ for each $s\geq [2\log_2(q-3)]$. For more techniques on the construction of Hadamard matrices or Hadamard designs, we refer the reader to \cite{Hedayat,Seberry,Craigen}.

\section{Unicyclic  graphs with  three or four distinct $\mathcal{L}$-eigenvalues}
\begin{figure}
\unitlength 2mm 
\linethickness{0.4pt}
\ifx\plotpoint\undefined\newsavebox{\plotpoint}\fi 
\begin{picture}(78,45.3)(0,0)
\put(11,35){\line(0,1){0}}
\put(14,41){\line(-2,-3){4}}
\put(10,35){\line(1,0){8}}
\put(18,35){\line(-2,3){4}}
\put(24,35){\line(0,1){0}}
\put(27,41){\line(-2,-3){4}}
\put(23,35){\line(1,0){8}}
\put(31,35){\line(-2,3){4}}
\put(39,35){\line(2,3){4}}
\put(43,41){\line(2,-3){4}}
\put(47,35){\line(-1,0){8}}
\put(58,35){\line(1,0){8}}
\put(36,39){\line(3,-4){3}}
\put(39,35){\line(-3,-4){3}}
\multiput(36,33)(.05,.03333333){60}{\line(1,0){.05}}
\multiput(39,35)(-.05,.03333333){60}{\line(-1,0){.05}}
\put(47,35){\line(3,4){3}}
\multiput(47,35)(.05,.03333333){60}{\line(1,0){.05}}
\put(47,35){\line(3,-4){3}}
\multiput(47,35)(.05,-.03333333){60}{\line(1,0){.05}}
\put(58,35){\line(-3,4){3}}
\multiput(55,37)(.05,-.03333333){60}{\line(1,0){.05}}
\multiput(58,35)(-.05,-.03333333){60}{\line(-1,0){.05}}
\put(58,35){\line(-3,-4){3}}
\multiput(66,35)(.05,.03333333){60}{\line(1,0){.05}}
\put(66,35){\line(3,4){3}}
\put(66,35){\line(3,-4){3}}
\multiput(66,35)(.05,-.03333333){60}{\line(1,0){.05}}
\put(27,41){\line(-4,3){4}}
\put(27,41){\line(4,3){4}}
\multiput(27,41)(.03333333,.05){60}{\line(0,1){.05}}
\multiput(27,41)(-.03333333,.05){60}{\line(0,1){.05}}
\put(14,41){\circle*{1.2}}
\put(10,35){\circle*{1.2}}
\put(18,35){\circle*{1.2}}
\put(23,44){\circle*{1.2}}
\put(25,44){\circle*{1.2}}
\put(29,44){\circle*{1.2}}
\put(31,44){\circle*{1.2}}
\put(27,41){\circle*{1.2}}
\put(23,35){\circle*{1.2}}
\put(31,35){\circle*{1.2}}
\put(36,39){\circle*{1.2}}
\put(36,37){\circle*{1.2}}
\put(43,41){\circle*{1.2}}
\put(39,35){\circle*{1.2}}
\put(47,35){\circle*{1.2}}
\put(50,39){\circle*{1.2}}
\put(50,37){\circle*{1.2}}
\put(50,33){\circle*{1.2}}
\put(50,31){\circle*{1.2}}
\put(36,31){\circle*{1.2}}
\put(36,33){\circle*{1.2}}
\put(58,35){\circle*{1.2}}
\put(66,35){\circle*{1.2}}
\put(55,39){\circle*{1.2}}
\put(55,37){\circle*{1.2}}
\put(55,33){\circle*{1.2}}
\put(55,31){\circle*{1.2}}
\put(69,39){\circle*{1.2}}
\put(69,37){\circle*{1.2}}
\put(69,33){\circle*{1.2}}
\put(69,31){\circle*{1.2}}
\put(27,44){\makebox(0,0)[cc]{$\cdots$}}
\put(36,35.5){\makebox(0,0)[cc]{$\vdots$}}
\put(50,35.5){\makebox(0,0)[cc]{$\vdots$}}
\put(55,35.5){\makebox(0,0)[cc]{$\vdots$}}
\put(69,35.5){\makebox(0,0)[cc]{$\vdots$}}
\put(14,29.5){\makebox(0,0)[cc]{\scriptsize$U_1$}}
\put(27,29.5){\makebox(0,0)[cc]{\scriptsize$U_2(a)$}}
\put(43,29.5){\makebox(0,0)[cc]{\scriptsize$U_3(a,b)$}}
\put(23,45.3){\makebox(0,0)[cc]{\scriptsize$u_1$}}
\put(25,45.3){\makebox(0,0)[cc]{\scriptsize$u_2$}}
\put(29,45.3){\makebox(0,0)[cc]{\scriptsize$u_{a-1}$}}
\put(31,45.3){\makebox(0,0)[cc]{\scriptsize$u_a$}}
\put(34.5,39){\makebox(0,0)[cc]{\scriptsize$u_1$}}
\put(34.5,37){\makebox(0,0)[cc]{\scriptsize$u_2$}}
\put(34.2,33){\makebox(0,0)[cc]{\scriptsize$u_{a-1}$}}
\put(34.5,31){\makebox(0,0)[cc]{\scriptsize$u_a$}}
\put(51.5,39){\makebox(0,0)[cc]{\scriptsize$v_1$}}
\put(51.5,37){\makebox(0,0)[cc]{\scriptsize$v_2$}}
\put(51.4,33.7){\makebox(0,0)[cc]{\scriptsize$v_{b-1}$}}
\put(51.5,31){\makebox(0,0)[cc]{\scriptsize$v_{b}$}}
\put(53.5,39){\makebox(0,0)[cc]{\scriptsize$u_1$}}
\put(53.5,37){\makebox(0,0)[cc]{\scriptsize$u_2$}}
\put(53.6,33.7){\makebox(0,0)[cc]{\scriptsize$u_{a-1}$}}
\put(53.5,31){\makebox(0,0)[cc]{\scriptsize$u_a$}}
\put(70.5,39){\makebox(0,0)[cc]{\scriptsize$v_1$}}
\put(70.5,37){\makebox(0,0)[cc]{\scriptsize$v_2$}}
\put(70.7,33){\makebox(0,0)[cc]{\scriptsize$v_{b-1}$}}
\put(70.5,31){\makebox(0,0)[cc]{\scriptsize$v_b$}}
\put(9,21){\line(1,0){8}}
\put(17,21){\line(-2,3){4}}
\put(13,27){\line(-2,-3){4}}
\put(17,21){\line(1,0){4}}
\put(21,21){\line(3,4){3}}
\put(21,21){\line(3,-4){3}}
\multiput(21,21)(.05,-.03333333){60}{\line(1,0){.05}}
\multiput(21,21)(.05,.03333333){60}{\line(1,0){.05}}
\put(29,21){\line(1,0){8}}
\put(37,21){\line(-2,3){4}}
\put(33,27){\line(-2,-3){4}}
\put(37,21){\line(1,0){4}}
\put(41,21){\line(3,4){3}}
\put(41,21){\line(3,-4){3}}
\multiput(41,21)(.05,-.03333333){60}{\line(1,0){.05}}
\multiput(41,21)(.05,.03333333){60}{\line(1,0){.05}}
\put(37,21){\line(-4,-3){4}}
\put(37,21){\line(4,-3){4}}
\multiput(37,21)(-.03333333,-.05){60}{\line(0,-1){.05}}
\multiput(37,21)(.03333333,-.05){60}{\line(0,-1){.05}}
\put(49,27){\line(0,-1){6}}
\put(49,21){\line(1,0){6}}
\put(55,21){\line(0,1){6}}
\put(55,27){\line(-1,0){6}}
\put(60,27){\line(0,-1){6}}
\put(60,21){\line(1,0){6}}
\put(66,21){\line(0,1){6}}
\put(66,27){\line(-1,0){6}}
\put(66,21){\line(3,-4){3}}
\multiput(66,21)(.05,.03333333){60}{\line(1,0){.05}}
\multiput(66,21)(.05,-.03333333){60}{\line(1,0){.05}}
\put(66,21){\line(3,4){3}}
\put(13,27){\circle*{1.2}}
\put(9,21){\circle*{1.2}}
\put(17,21){\circle*{1.2}}
\put(24,25){\circle*{1.2}}
\put(24,23){\circle*{1.2}}
\put(24,19){\circle*{1.2}}
\put(24,17){\circle*{1.2}}
\put(33,27){\circle*{1.2}}
\put(29,21){\circle*{1.2}}
\put(37,21){\circle*{1.2}}
\put(33,18){\circle*{1.2}}
\put(35,18){\circle*{1.2}}
\put(39,18){\circle*{1.2}}
\put(41,18){\circle*{1.2}}
\put(44,25){\circle*{1.2}}
\put(44,23){\circle*{1.2}}
\put(44,19){\circle*{1.2}}
\put(44,17){\circle*{1.2}}
\put(49,27){\circle*{1.2}}
\put(55,27){\circle*{1.2}}
\put(49,21){\circle*{1.2}}
\put(55,21){\circle*{1.2}}
\put(60,27){\circle*{1.2}}
\put(60,21){\circle*{1.2}}
\put(66,27){\circle*{1.2}}
\put(66,21){\circle*{1.2}}
\put(69,25){\circle*{1.2}}
\put(69,23){\circle*{1.2}}
\put(69,19){\circle*{1.2}}
\put(69,17){\circle*{1.2}}
\put(24,21.5){\makebox(0,0)[cc]{$\vdots$}}
\put(37,18){\makebox(0,0)[cc]{$\cdots$}}
\put(44,21.5){\makebox(0,0)[cc]{$\vdots$}}
\put(69,21.5){\makebox(0,0)[cc]{$\vdots$}}
\put(41,21){\circle*{1.2}}
\put(21,21){\circle*{1.2}}
\put(62,29.5){\makebox(0,0)[cc]{\scriptsize$U_4(a,b,c)$}}
\put(16,15.5){\makebox(0,0)[cc]{\scriptsize$U_5(a)$}}
\put(37,15.5){\makebox(0,0)[cc]{\scriptsize$U_6(a,b)$}}
\put(52,15.5){\makebox(0,0)[cc]{\scriptsize$U_7$}}
\put(63,15.5){\makebox(0,0)[cc]{\scriptsize$U_8(a)$}}
\put(25.5,25){\makebox(0,0)[cc]{\scriptsize$u_1$}}
\put(25.5,23){\makebox(0,0)[cc]{\scriptsize$u_2$}}
\put(25.7,19){\makebox(0,0)[cc]{\scriptsize$u_{a-1}$}}
\put(25.5,17){\makebox(0,0)[cc]{\scriptsize$u_a$}}
\put(45.5,25){\makebox(0,0)[cc]{\scriptsize$u_1$}}
\put(45.5,23){\makebox(0,0)[cc]{\scriptsize$u_2$}}
\put(45.7,19){\makebox(0,0)[cc]{\scriptsize$u_{a-1}$}}
\put(45.5,17){\makebox(0,0)[cc]{\scriptsize$u_a$}}
\put(33,16.8){\makebox(0,0)[cc]{\scriptsize$v_{1}$}}
\put(35,16.8){\makebox(0,0)[cc]{\scriptsize$v_{2}$}}
\put(39,16.8){\makebox(0,0)[cc]{\scriptsize$v_{b-1}$}}
\put(41,16.8){\makebox(0,0)[cc]{\scriptsize$v_{b}$}}
\put(70.5,25){\makebox(0,0)[cc]{\scriptsize$u_{1}$}}
\put(70.5,23){\makebox(0,0)[cc]{\scriptsize$u_{2}$}}
\put(70.7,19){\makebox(0,0)[cc]{\scriptsize$u_{a-1}$}}
\put(70.5,17){\makebox(0,0)[cc]{\scriptsize$u_{a}$}}
\put(28.5,40.8){\makebox(0,0)[cc]{\scriptsize$u_0$}}
\put(23,33.5){\makebox(0,0)[cc]{\scriptsize$v_0$}}
\put(31,33.5){\makebox(0,0)[cc]{\scriptsize$w_0$}}
\put(44.5,40.8){\makebox(0,0)[cc]{\scriptsize$w_0$}}
\put(39,33.5){\makebox(0,0)[cc]{\scriptsize$u_0$}}
\put(47,33.5){\makebox(0,0)[cc]{\scriptsize$v_0$}}
\put(58,33.5){\makebox(0,0)[cc]{\scriptsize$u_0$}}
\put(66,33.5){\makebox(0,0)[cc]{\scriptsize$v_0$}}
\put(21,19.5){\makebox(0,0)[cc]{\scriptsize$u_0$}}
\put(17,19.5){\makebox(0,0)[cc]{\scriptsize$v_0$}}
\put(9,19.5){\makebox(0,0)[cc]{\scriptsize$w_0$}}
\put(41,22.5){\makebox(0,0)[cc]{\scriptsize$u_0$}}
\put(37,22.5){\makebox(0,0)[cc]{\scriptsize$v_0$}}
\put(29,19.5){\makebox(0,0)[cc]{\scriptsize$w_0$}}
\put(66,19.5){\makebox(0,0)[cc]{\scriptsize$u_0$}}
\put(60,19.5){\makebox(0,0)[cc]{\scriptsize$v_0$}}
\put(61,26){\makebox(0,0)[cc]{\scriptsize$w_0$}}
\put(1.7,11){\line(3,-4){3}}
\put(4.7,7){\line(-3,-4){3}}
\multiput(1.7,5)(.05,.03333333){60}{\line(1,0){.05}}
\multiput(4.7,7)(-.05,.03333333){60}{\line(-1,0){.05}}
\put(4.7,13){\line(0,-1){6}}
\put(4.7,13){\line(1,0){6}}
\put(10.7,13){\line(0,-1){6}}
\put(10.7,7){\line(-1,0){6}}
\put(10.7,7){\line(3,4){3}}
\multiput(10.7,7)(.05,.03333333){60}{\line(1,0){.05}}
\put(10.7,7){\line(3,-4){3}}
\multiput(10.7,7)(.05,-.03333333){60}{\line(1,0){.05}}
\put(18.7,7){\line(1,0){6}}
\put(24.7,7){\line(0,1){4}}
\multiput(24.7,11)(-.05,.03333333){60}{\line(-1,0){.05}}
\multiput(21.7,13)(-.05,-.03333333){60}{\line(-1,0){.05}}
\put(18.7,11){\line(0,-1){4}}
\put(35.7,7){\line(-1,0){6}}
\put(35.7,7){\line(3,4){3}}
\multiput(35.7,7)(.05,.03333333){60}{\line(1,0){.05}}
\put(35.7,7){\line(3,-4){3}}
\multiput(35.7,7)(.05,-.03333333){60}{\line(1,0){.05}}
\put(29.7,7){\line(0,1){4}}
\multiput(29.7,11)(.05,.03333333){60}{\line(1,0){.05}}
\multiput(32.7,13)(.05,-.03333333){60}{\line(1,0){.05}}
\put(35.7,11){\line(0,-1){4}}
\put(4.7,13){\circle*{1.2}}
\put(10.7,13){\circle*{1.2}}
\put(4.7,7){\circle*{1.2}}
\put(10.7,7){\circle*{1.2}}
\put(1.7,11){\circle*{1.2}}
\put(1.7,9){\circle*{1.2}}
\put(1.7,5){\circle*{1.2}}
\put(1.7,3){\circle*{1.2}}
\put(13.7,11){\circle*{1.2}}
\put(13.7,9){\circle*{1.2}}
\put(13.7,5){\circle*{1.2}}
\put(13.7,3){\circle*{1.2}}
\put(21.7,13){\circle*{1.2}}
\put(18.7,11){\circle*{1.2}}
\put(18.7,7){\circle*{1.2}}
\put(24.7,7){\circle*{1.2}}
\put(24.7,11){\circle*{1.2}}
\put(32.7,13){\circle*{1.2}}
\put(29.7,11){\circle*{1.2}}
\put(29.7,7){\circle*{1.2}}
\put(35.7,11){\circle*{1.2}}
\put(35.7,7){\circle*{1.2}}
\put(38.7,11){\circle*{1.2}}
\put(38.7,9){\circle*{1.2}}
\put(38.7,5){\circle*{1.2}}
\put(38.7,3){\circle*{1.2}}
\put(1.7,7.5){\makebox(0,0)[cc]{$\vdots$}}
\put(13.7,7.5){\makebox(0,0)[cc]{$\vdots$}}
\put(38.7,7.5){\makebox(0,0)[cc]{$\vdots$}}
\put(7.7,0){\makebox(0,0)[cc]{\scriptsize$U_9(a,b)$}}
\put(21.7,0){\makebox(0,0)[cc]{\scriptsize$U_{10}$}}
\put(33.7,0){\makebox(0,0)[cc]{\scriptsize$U_{11}(a)$}}
\put(.2,11){\makebox(0,0)[cc]{\scriptsize$u_{1}$}}
\put(.2,9){\makebox(0,0)[cc]{\scriptsize$u_{2}$}}
\put(0,5){\makebox(0,0)[cc]{\scriptsize$u_{a-1}$}}
\put(.2,3){\makebox(0,0)[cc]{\scriptsize$u_{a}$}}
\put(15.2,11){\makebox(0,0)[cc]{\scriptsize$v_{1}$}}
\put(15.2,9){\makebox(0,0)[cc]{\scriptsize$v_{2}$}}
\put(15.4,5){\makebox(0,0)[cc]{\scriptsize$v_{b-1}$}}
\put(15.2,2){\makebox(0,0)[cc]{\scriptsize$v_{b}$}}
\put(43.7,11){\line(3,-4){3}}
\put(46.7,7){\line(-3,-4){3}}
\multiput(43.7,5)(.05,.03333333){60}{\line(1,0){.05}}
\multiput(46.7,7)(-.05,.03333333){60}{\line(-1,0){.05}}
\put(52.7,7){\line(-1,0){6}}
\put(52.7,7){\line(3,4){3}}
\multiput(52.7,7)(.05,.03333333){60}{\line(1,0){.05}}
\put(52.7,7){\line(3,-4){3}}
\multiput(52.7,7)(.05,-.03333333){60}{\line(1,0){.05}}
\put(60.7,11){\line(0,-1){4}}
\multiput(60.7,7)(.05,-.03333333){60}{\line(1,0){.05}}
\multiput(63.7,5)(.05,.03333333){60}{\line(1,0){.05}}
\put(66.7,7){\line(0,1){4}}
\multiput(66.7,11)(-.05,.03333333){60}{\line(-1,0){.05}}
\multiput(63.7,13)(-.05,-.03333333){60}{\line(-1,0){.05}}
\put(46.7,7){\line(0,1){4}}
\multiput(46.7,11)(.05,.03333333){60}{\line(1,0){.05}}
\multiput(49.7,13)(.05,-.03333333){60}{\line(1,0){.05}}
\put(52.7,11){\line(0,-1){4}}
\put(43.7,11){\circle*{1.2}}
\put(43.7,9){\circle*{1.2}}
\put(43.7,5){\circle*{1.2}}
\put(43.7,3){\circle*{1.2}}
\put(49.7,13){\circle*{1.2}}
\put(46.7,11){\circle*{1.2}}
\put(52.7,11){\circle*{1.2}}
\put(55.7,11){\circle*{1.2}}
\put(55.7,9){\circle*{1.2}}
\put(55.7,5){\circle*{1.2}}
\put(55.7,3){\circle*{1.2}}
\put(63.7,13){\circle*{1.2}}
\put(60.7,11){\circle*{1.2}}
\put(60.7,7){\circle*{1.2}}
\put(63.7,5){\circle*{1.2}}
\put(66.7,11){\circle*{1.2}}
\put(66.7,7){\circle*{1.2}}
\put(46.7,7){\circle*{1.2}}
\put(52.7,7){\circle*{1.2}}
\put(43.7,7.5){\makebox(0,0)[cc]{$\vdots$}}
\put(55.7,7.5){\makebox(0,0)[cc]{$\vdots$}}
\put(49.7,0){\makebox(0,0)[cc]{\scriptsize$U_{12}(a,b)$}}
\put(63.7,0){\makebox(0,0)[cc]{\scriptsize$U_{13}$}}
\put(40.2,11){\makebox(0,0)[cc]{\scriptsize$u_1$}}
\put(40.2,9){\makebox(0,0)[cc]{\scriptsize$u_2$}}
\put(40.1,5.7){\makebox(0,0)[cc]{\scriptsize$u_{a-1}$}}
\put(40.2,3){\makebox(0,0)[cc]{\scriptsize$u_a$}}
\put(42.2,11){\makebox(0,0)[cc]{\scriptsize$u_1$}}
\put(42.2,9){\makebox(0,0)[cc]{\scriptsize$u_2$}}
\put(42.3,5.7){\makebox(0,0)[cc]{\scriptsize$u_{a-1}$}}
\put(42.2,3){\makebox(0,0)[cc]{\scriptsize$u_a$}}
\put(57.2,11){\makebox(0,0)[cc]{\scriptsize$v_1$}}
\put(57.2,9){\makebox(0,0)[cc]{\scriptsize$v_2$}}
\put(57.4,5){\makebox(0,0)[cc]{\scriptsize$v_{b-1}$}}
\put(57.2,3){\makebox(0,0)[cc]{\scriptsize$v_{b}$}}
\put(4.7,5.5){\makebox(0,0)[cc]{\scriptsize$u_0$}}
\put(10.7,5.5){\makebox(0,0)[cc]{\scriptsize$v_0$}}
\put(5.7,12){\makebox(0,0)[cc]{\scriptsize$w_0$}}
\put(35.7,5.5){\makebox(0,0)[cc]{\scriptsize$u_0$}}
\put(29.7,5.5){\makebox(0,0)[cc]{\scriptsize$v_0$}}
\put(30.7,10){\makebox(0,0)[cc]{\scriptsize$w_0$}}
\put(46.7,5.5){\makebox(0,0)[cc]{\scriptsize$u_0$}}
\put(52.7,5.5){\makebox(0,0)[cc]{\scriptsize$v_0$}}
\put(47.7,10){\makebox(0,0)[cc]{\scriptsize$w_0$}}
\put(73.7,0){\makebox(0,0)[cc]{\scriptsize$U_{14}$}}
\put(71,11){\line(0,-1){4}}
\put(71,7){\line(0,-1){2}}
\put(77,5){\line(0,1){6}}
\multiput(77,11)(-.05,.03333333){60}{\line(-1,0){.05}}
\multiput(74,13)(-.05,-.03333333){60}{\line(-1,0){.05}}
\put(71,5){\line(1,0){6}}
\put(74,13){\circle*{1.2}}
\put(71,11){\circle*{1.2}}
\put(71,5){\circle*{1.2}}
\put(71,8){\circle*{1.2}}
\put(77,11){\circle*{1.2}}
\put(77,8){\circle*{1.2}}
\put(77,5){\circle*{1.2}}
\put(58,45.5){\makebox(0,0)[cc]{\scriptsize$w_1$}}
\put(60,45.5){\makebox(0,0)[cc]{\scriptsize$w_2$}}
\put(64,45.5){\makebox(0,0)[cc]{\scriptsize$w_{c-1}$}}
\put(66,45.5){\makebox(0,0)[cc]{\scriptsize$w_c$}}
\put(62,44){\makebox(0,0)[cc]{$\cdots$}}
\put(64,41){\makebox(0,0)[cc]{\scriptsize$w_0$}}
\put(58,35){\line(2,3){4}}
\put(62,41){\line(2,-3){4}}
\put(62,41){\line(-4,3){4}}
\multiput(60,44)(.03333333,-.05){60}{\line(0,-1){.05}}
\multiput(62,41)(.03333333,.05){60}{\line(0,1){.05}}
\put(62,41){\line(4,3){4}}
\put(62,41){\circle*{1.2}}
\put(58,44){\circle*{1.2}}
\put(60,44){\circle*{1.2}}
\put(64,44){\circle*{1.2}}
\put(66,44){\circle*{1.2}}
\end{picture}
\caption{\footnotesize Unicyclic graphs with diameter at most three ($a,b,c\geq 1$).}\label{fig-1}
\end{figure}
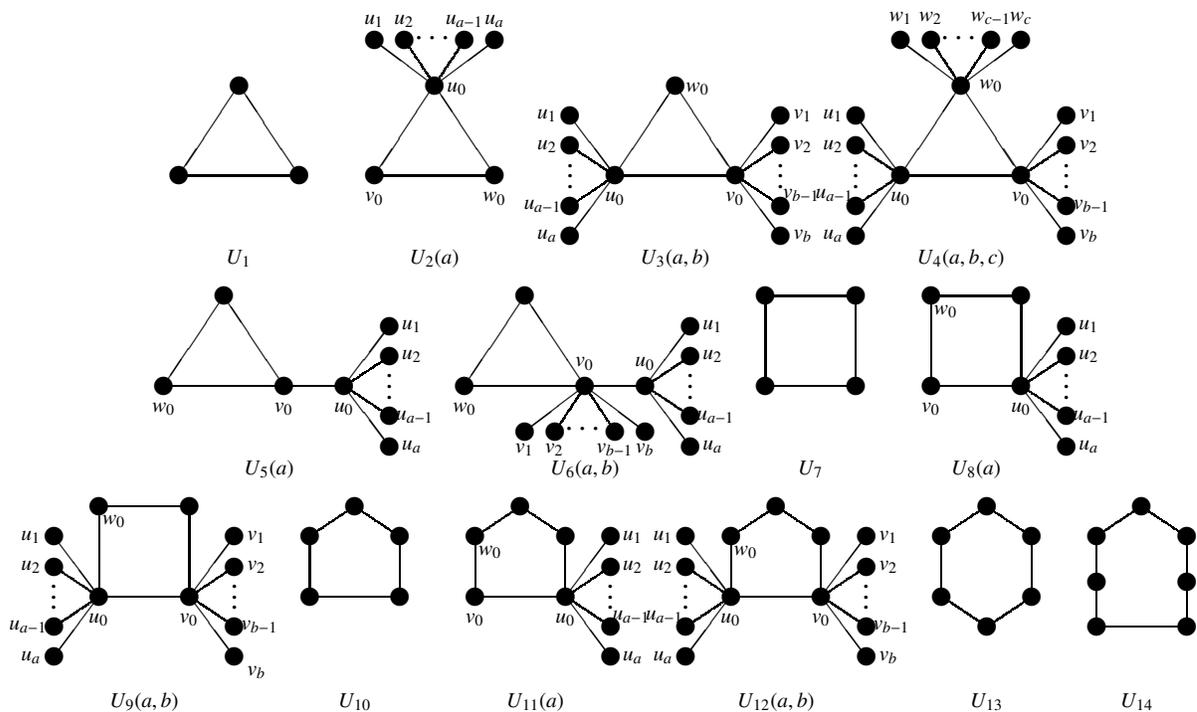

In \cite{Braga}, Braga et al. determine all trees with four or five distinct $\mathcal{L}$-eigenvalues.
Recall that a connected graph is called a unicyclic graph if it has $n$ vertices and $n$ edges. In this section, we completely characterize unicyclic  graphs with three or four distinct $\mathcal{L}$-eigenvalues.

The following lemma shows that the diameter of a connected graph is less than the number of distinct $\mathcal{L}$-eigenvalues.
\begin{lem}\label{lem-3-1} (See \cite{Chung}.)
Let $G$ be a connected graph with diameter $d$. If $G$ has $s$ distinct $\mathcal{L}$-eigenvalues, then $d\leq s-1$.
\end{lem}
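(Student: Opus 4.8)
The plan is to show that the diameter $d$ forces the minimal polynomial of $\mathcal{L}$ to have degree at least $d+1$. Since $\mathcal{L}$ is real symmetric it is diagonalizable, so the degree of its minimal polynomial equals the number $s$ of distinct $\mathcal{L}$-eigenvalues; hence $s\geq d+1$, i.e. $d\leq s-1$. Concretely, I would prove that the $n\times n$ matrices $I,\mathcal{L},\mathcal{L}^2,\ldots,\mathcal{L}^d$ are linearly independent. (If $n=1$ then $G=K_1$, $d=0$, $s=1$ and the claim is trivial; so we may assume $G$ is connected on $n\geq 2$ vertices, whence $G$ has no isolated vertex, $D$ is invertible, and $\mathcal{L}=I-D^{-\frac12}AD^{-\frac12}$.)

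The first key step is to record the ``support'' behaviour of powers of $\mathcal{L}$. Put $P=I-\mathcal{L}=D^{-\frac12}AD^{-\frac12}$. Then $P$ has nonnegative entries, with $P_{xy}>0$ exactly when $xy\in E(G)$, so $(P^k)_{xy}$ is a sum of positive weights over walks of length $k$ from $x$ to $y$; in particular $(P^k)_{xy}>0$ whenever such a walk exists, and $(P^k)_{xy}=0$ when $k<\mathrm{dist}_G(x,y)$. Expanding $\mathcal{L}^j=(I-P)^j=\sum_{k=0}^{j}\binom{j}{k}(-1)^k P^k$ and looking at an off-diagonal entry $(x,y)$ with $\delta:=\mathrm{dist}_G(x,y)$, the $k=0$ term vanishes and every $P^k$ with $k<\delta$ contributes $0$; hence $(\mathcal{L}^j)_{xy}=0$ for all $j<\delta$, while $(\mathcal{L}^\delta)_{xy}=(-1)^\delta (P^\delta)_{xy}\neq 0$ because a shortest $x$--$y$ path is a walk of length $\delta$.

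The second key step is the elimination argument along a diametral path. Choose vertices $u=v_0,v_1,\ldots,v_d=v$ forming a shortest path that realizes the diameter, so $\mathrm{dist}_G(u,v_i)=i$ for each $i$. Suppose $\sum_{j=0}^{d}c_j\mathcal{L}^j=0$. Reading the $(u,v_d)$-entry and using the previous step gives $c_d(\mathcal{L}^d)_{uv_d}=0$, so $c_d=0$; then the $(u,v_{d-1})$-entry gives $c_{d-1}=0$; inductively, the $(u,v_i)$-entry gives $c_i(\mathcal{L}^i)_{uv_i}=0$ once $c_{i+1}=\cdots=c_d=0$, so $c_i=0$ for every $i$. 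Thus $I,\mathcal{L},\ldots,\mathcal{L}^d$ are linearly independent, the minimal polynomial of $\mathcal{L}$ has degree $\geq d+1$, and therefore $s\geq d+1$.

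I do not anticipate a serious obstacle: the argument is just the triangular elimination trick combined with the elementary positivity of walk-weighted entries of $P=D^{-\frac12}AD^{-\frac12}$. The only points needing care are checking that the off-diagonal entries of $\mathcal{L}^j$ really inherit the zero/nonzero pattern of the corresponding powers of the adjacency matrix (they do, since $D^{-\frac12}AD^{-\frac12}$ has the same zero pattern as $A$ at every power), and disposing of the degenerate case $n=1$ so that the expression $\mathcal{L}=I-D^{-\frac12}AD^{-\frac12}$ is legitimate.
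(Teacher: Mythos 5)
Your proof is correct. The paper offers no proof of this lemma at all, simply citing Chung; your argument --- that $(\mathcal{L}^j)_{xy}=0$ for $j<\mathrm{dist}_G(x,y)$ while $(\mathcal{L}^{\delta})_{xy}=(-1)^{\delta}(P^{\delta})_{xy}\neq 0$ at $\delta=\mathrm{dist}_G(x,y)$, so that $I,\mathcal{L},\ldots,\mathcal{L}^d$ are linearly independent and the minimal polynomial (of degree $s$, by diagonalizability of the real symmetric $\mathcal{L}$) has degree at least $d+1$ --- is the standard one and is essentially the proof found in the cited source, with all the small points (the $n=1$ case, the zero pattern of $D^{-\frac{1}{2}}AD^{-\frac{1}{2}}$ matching that of $A$) handled correctly.
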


According to Lemma \ref{lem-3-1},  connected graphs with at most four distinct $\mathcal{L}$-eigenvalues have diameter at most three. In Fig. \ref{fig-1}, we list all unicyclic graphs whose diameter are at most three for later use. The following theorem determines all unicyclic graphs with three distinct $\mathcal{L}$-eigenvalues.

\begin{thm}
Let $G$ be a unicyclic graph. Then $G$ has three distinct $\mathcal{L}$-eigenvalues if and only if $G=C_4$ or $G=C_5$.
\end{thm}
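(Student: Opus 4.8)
The plan is to combine the diameter bound of Lemma~\ref{lem-3-1} with a short structural description of unicyclic graphs of diameter at most $2$, and then dispose of the few surviving candidates one at a time. For the \emph{if} part I would simply record that $\mathrm{Spec}_{\mathcal{L}}(C_4)=\{[2]^1,[1]^2,[0]^1\}$ and $\mathrm{Spec}_{\mathcal{L}}(C_5)=\{[\frac{5+\sqrt5}{4}]^2,[\frac{5-\sqrt5}{4}]^2,[0]^1\}$ (the $\mathcal{L}$-eigenvalues of $C_n$ being $1-\cos(2\pi j/n)$, $0\le j\le n-1$), so both graphs have exactly three distinct $\mathcal{L}$-eigenvalues.

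For the \emph{only if} part, let $G$ be a unicyclic graph with exactly three distinct $\mathcal{L}$-eigenvalues. By Lemma~\ref{lem-3-1}, $\mathrm{diam}(G)\le 2$. Write $C_g$ for the unique cycle of $G$. The first step is the observation that $C_g$ is an isometric subgraph of $G$: every path joining two vertices of $C_g$ stays inside $C_g$, since leaving the cycle means entering a pendant tree rooted at a single cycle vertex, from which one cannot return to $C_g$ without repeating that vertex. Hence $\lfloor g/2\rfloor=\mathrm{diam}(C_g)\le 2$, so $g\in\{3,4,5\}$. If $g=4$ then $G$ is bipartite, so by Lemma~\ref{lem-2-2}(viii),(ix) its $\mathcal{L}$-spectrum is $\{2\}$ together with a multiset symmetric about $1$; with only three distinct values it must be $\{[2]^1,[1]^{n-2},[0]^1\}$, whence $G\cong K_{s,n-s}$ by Theorem~\ref{thm-2-1} (or Corollary~\ref{cor-0-1}), and then $s(n-s)=n$ forces $s=2,\ n=4$, i.e. $G=C_4$. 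If $g=5$, then for any vertex $w\notin V(C_5)$ lying in a pendant tree rooted at $v_i\in V(C_5)$ and any $v_j\in V(C_5)$ at cycle-distance $2$ from $v_i$, we get $d_G(w,v_j)\ge d_G(w,v_i)+2\ge 3$, a contradiction; so $G=C_5$.

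It remains to treat $g=3$, which is the heart of the argument. The diameter bound forces every pendant tree attached to the triangle to be a single edge (a depth-$2$ vertex would lie at distance $\ge3$ from another triangle vertex) and all such edges to hang from one triangle vertex (pendant edges at two distinct triangle vertices would give a pair at distance $3$); thus $G$ is either $C_3$ or the graph $U_2(a)$ of Fig.~\ref{fig-1} (the triangle $v_1v_2v_3$ with $a\ge1$ pendant edges at $v_1$). Now $C_3=K_3$ has only two distinct $\mathcal{L}$-eigenvalues and is excluded, so we must rule out every $U_2(a)$. Assuming $\mathrm{Spec}_{\mathcal{L}}(U_2(a))=\{[\alpha]^{m_1},[\beta]^{m_2},[0]^1\}$, I would apply equation~(\ref{equ-1}) of Lemma~\ref{lem-2-1}, namely $(\mathcal{L}-\alpha I)(\mathcal{L}-\beta I)=\frac{\alpha\beta}{2m}D^{\frac12}JD^{\frac12}$, and compare the $(u_1,u_1)$, $(u_1,v_1)$ and $(u_1,v_2)$ entries of both sides for a pendant vertex $u_1$ (with neighbor $v_1$ and $v_2$ a triangle vertex). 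Using $m=a+3$ and $d_{v_1}=a+2$, the three resulting scalar equations determine $\alpha+\beta$ and $\alpha\beta$ and then collapse to $k^{3/2}=2k(1-\sqrt2)+2\sqrt2+1$ with $k=a+2\ge3$; the left side exceeds the right for every such $k$, so there is no solution and no $U_2(a)$ qualifies. (Alternatively, for $a\ge2$ one can use Lemma~\ref{lem-2-3} to see that $1$ and $\frac32$ are both $\mathcal{L}$-eigenvalues, so the spectrum would be $\{[\frac32]^{m_1},[1]^{m_2},[0]^1\}$ and Theorem~\ref{thm-2-1} would force $U_2(a)$ to be complete multipartite, contradicting that a connected non-bipartite complete multipartite graph has minimum degree at least $2$ while $\deg(u_1)=1$; the remaining graph $U_2(1)$, the paw, is then checked directly.)

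I expect the elimination of the graphs $U_2(a)$ to be the only real obstacle; the rest is the diameter bound, the elementary but essential isometry of the unique cycle, and two explicit spectra. One should take care in stating the isometry claim cleanly, since it is exactly what both restricts $g$ to $\{3,4,5\}$ and forbids pendant vertices when $g\in\{4,5\}$.
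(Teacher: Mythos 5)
Your overall strategy coincides with the paper's: invoke the diameter bound of Lemma \ref{lem-3-1} to reduce to unicyclic graphs of diameter $2$, identify the candidates $C_3$, $U_2(a)$, $C_4$, $C_5$, eliminate $U_2(a)$ for $a\geq 2$ via Lemma \ref{lem-2-3} and Theorem \ref{thm-2-1}, and settle the rest by explicit spectra. Your derivation of the candidate list from the isometry of the unique cycle, and your handling of the $g=4$ case through Corollary \ref{cor-0-1} together with the edge count $s(n-s)=n$, are clean substitutes for the paper's appeal to its catalogue in Fig.~\ref{fig-1}; these parts are fine.

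However, your \emph{primary} elimination of $U_2(a)$ does not work as stated. Comparing the $(u_1,u_1)$-, $(u_1,v_1)$- and $(u_1,v_2)$-entries of $(\mathcal{L}-\alpha I)(\mathcal{L}-\beta I)=\frac{\alpha\beta}{2m}D^{\frac12}JD^{\frac12}$ gives, with $m=a+3$, $d_{v_1}=a+2$ and $d_{v_2}=2$, the three relations $\frac{\alpha\beta}{2m}-(\alpha-1)(\beta-1)=\frac{1}{a+2}$, \ $\alpha+\beta-2=\frac{\alpha\beta}{2m}(a+2)$ \ and \ $2\cdot\frac{\alpha\beta}{2m}=\frac{1}{a+2}$. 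These force $\alpha+\beta=\frac52$ and produce two expressions for $\alpha\beta$ whose equality reduces to $2a+6=3a+5$, i.e.\ $a=1$; no relation of the shape $k^{3/2}=2k(1-\sqrt2)+2\sqrt2+1$ arises, and the system is perfectly consistent for the paw $U_2(1)$ (the resulting values $\alpha\approx1.7287$, $\beta\approx0.7713$ are genuine eigenvalues of $U_2(1)$, which fails only because $1.5$ is a third nonzero eigenvalue that these three entries cannot detect, the conditions of Lemma \ref{lem-2-1} being entrywise necessary but not sufficient when only a few entries are checked). So this route rules out $a\ge2$ but cannot by itself dispose of $a=1$. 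Fortunately your parenthetical alternative is exactly the paper's argument and is complete: for $a\ge2$, Lemma \ref{lem-2-3} forces $1$ to be an $\mathcal{L}$-eigenvalue, Theorem \ref{thm-2-1} then forces $U_2(a)$ to be complete bipartite or regular complete multipartite, which is absurd for a non-bipartite graph with a pendant vertex, and $U_2(1)$ is excluded by direct computation of its four distinct eigenvalues. Promote that alternative to the main argument and the proof is correct.
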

\begin{proof}
Assume that $G$ has three distinct $\mathcal{L}$-eigenvalues. By Lemma \ref{lem-3-1}, the diameter of $G$ is exactly two because it cannot be a complete graph, and so $G$ must be one of the following graphs: $U_2(a)$, $U_7=C_4$, $U_{10}=C_5$ (see Fig. \ref{fig-1}). First suppose that  $G=U_2(a)$. If $a=1$, then $\mathrm{Spec}_{\mathcal{L}}(U_2(1))=\{[1.7287], [1.5000],[0.7713],[0]\}$, and so $G\neq U_2(1)$.
 If $a\geq 2$, by Lemma \ref{lem-2-3}, $G$ has $1$ as its $\mathcal{L}$-eigenvalue. By Theorem \ref{thm-2-1},  $G$ must be a complete bipartite graph or a regular complete multipartite graph, both are impossible. Furthermore, by simple computation we know that $\mathrm{Spec}_{\mathcal{L}}(C_4)=\{[2], [1]^2,[0]\}$ and $\mathrm{Spec}_{\mathcal{L}}(C_5)=\{[1.8090]^2, [0.6910]^2,[0]\}$. Therefore, $G$ has three distinct $\mathcal{L}$-eigenvalues if and only if $G=C_4$ or $G=C_5$.
 \end{proof}

Let $G$ be a connected graph with four distinct $\mathcal{L}$-eigenvalues. By Lemma \ref{lem-2-2}, we may assume that $\mathrm{Spec}_{\mathcal{L}}(G)=\{[\alpha]^{m_1},[\beta]^{m_2},[\gamma]^{m_3},[0]^1\}$, where $2\geq \alpha> \beta> \gamma> 0$. According to Lemma \ref{lem-2-1}, we have
\begin{equation}\label{equ-8}
(\mathcal{L}-\alpha I)(\mathcal{L}-\beta I)(\mathcal{L}-\gamma I)=-\frac{\alpha\beta\gamma}{2m}D^{\frac{1}{2}}JD^{\frac{1}{2}},
\end{equation}
where $D$ is the diagonal degree matrix of $G$. By considering the $(u,u)$-entry  at both sides of (\ref{equ-8}), we obtain that
\begin{equation}\label{equ-9}
\sum_{\small\begin{smallmatrix}
u\sim v\\
v\sim w\\
w\sim u
\end{smallmatrix}}\frac{1}{d_vd_w}+(\alpha+\beta+\gamma-3)\sum_{v\sim u}\frac{1}{d_v}+(\alpha-1)(\beta-1)(\gamma-1)d_u=\frac{\alpha\beta\gamma}{2m}d_u^2.
\end{equation}

By using (\ref{equ-9}) and Lemma \ref{lem-2-3}, we now determine all unicyclic graphs with four distinct  $\mathcal{L}$-eigenvalues.

\begin{thm}
Let $G$ be a unicyclic graph. Then $G$ has four distinct $\mathcal{L}$-eigenvalues if and only if $G=C_6$, $G=C_7$, $G=U_2(1)$ or $G=U_4(1,1,1)$.
\end{thm}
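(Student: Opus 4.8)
The plan is to prove the nontrivial (``only if'') direction by first reducing to a finite list of candidates via Lemma \ref{lem-3-1}, and then eliminating all but four of them using Equation (\ref{equ-9}), Lemma \ref{lem-2-3}, and Corollaries \ref{cor-2-0} and \ref{cor-2-1}. Suppose $G$ is unicyclic with $\mathrm{Spec}_{\mathcal{L}}(G)=\{[\alpha]^{m_1},[\beta]^{m_2},[\gamma]^{m_3},[0]^1\}$ and $2\ge\alpha>\beta>\gamma>0$, so $2m=2n$. By Lemma \ref{lem-3-1} the diameter of $G$ is at most $3$, hence $G$ is one of the graphs in Figure \ref{fig-1} for suitable parameters $a,b,c$, where $U_1=K_3$, $U_7=C_4$, $U_{10}=C_5$, $U_{13}=C_6$, $U_{14}=C_7$. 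The cycles are settled at once: $C_n$ has exactly $\lfloor n/2\rfloor+1$ distinct $\mathcal{L}$-eigenvalues (namely $1-\cos(2\pi k/n)$, $0\le k\le n-1$), so among $C_3,\dots,C_7$ only $C_6$ and $C_7$ have exactly four. It then remains to treat the non-cycle families, which I split according to whether the unique cycle of $G$ is even (i.e.\ $C_4$: the families $U_8,U_9$) or odd (i.e.\ $C_3$ or $C_5$: the families $U_2$--$U_6$ and $U_{11},U_{12}$).

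In the even-cycle case $G$ is bipartite, so by Corollary \ref{cor-2-1} the number of distinct $\mathcal{L}$-eigenvalues of $G$ is odd whenever two vertices of $G$ have the same neighborhood; wanting exactly four, no two vertices may share a neighborhood. In $U_8(a)$ the two cycle-neighbors of the pendant-bearing vertex already have equal neighborhoods, and in $U_9(a,b)$ with $\max(a,b)\ge 2$ the pendants at a common vertex do; so the only survivor is $U_9(1,1)$. For that graph each part of the bipartition has three vertices, and four distinct $\mathcal{L}$-eigenvalues would force the $3\times 3$ matrix $B^{*}B^{*T}$ (notation as for bipartite graphs in Section 3) to have spectrum $\{1,\xi,\xi\}$ for some $0<\xi<1$; but $\mathrm{tr}(B^{*}B^{*T})=\sum_{uv\in E(G)}1/(d_ud_v)=49/36$ would give $\xi=13/72$, contradicting $\det(B^{*}B^{*T})=1/36$. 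Hence the even-cycle case contributes only $C_6$.

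The odd-cycle case is the main obstacle and is where Equation (\ref{equ-9}) does the work. Here $G$ is non-bipartite, so $1<\alpha<2$ by Lemma \ref{lem-2-2}, and $\gamma<1$ by Corollary \ref{cor-2-0} since none of these graphs is complete multipartite. Three ingredients recur: (i) evaluating (\ref{equ-9}) at a pendant $u$ with neighbor $v$ (no triangle through $u$) gives $(\alpha+\beta+\gamma-3)/d_v+(\alpha-1)(\beta-1)(\gamma-1)=\alpha\beta\gamma/(2n)$; (ii) if some vertex carries two or more pendant leaves then $1\in\{\alpha,\beta,\gamma\}$ by Lemma \ref{lem-2-3}(i), and since $\alpha>1>\gamma$ this forces $\beta=1$; (iii) when the cycle is a triangle, Lemma \ref{lem-2-3}(ii) applied to the two degree-$2$ triangle vertices (a clique of size $2$ with one common external neighbor) forces $\tfrac32\in\{\alpha,\beta,\gamma\}$. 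Substituting these into (\ref{equ-9}), both at pendants of vertices of differing degrees and at the cycle vertices (where a triangle through the vertex contributes the extra term $1/(d_vd_w)$), one argues family by family: for $U_2(a)$ with $a\ge2$, (ii)--(iii) give $\alpha=\tfrac32,\beta=1$ and then (i) gives $\gamma=\tfrac{2(a+3)}{a+6}>1$, a contradiction, so $a=1$; for $U_3(a,b)$, comparing the pendant equations at the two arms (with (ii)) forces $a=b$, and then $a=b\ge2$ forces $\beta=1$ while the same equations give $(\alpha-1)(\beta-1)(\gamma-1)=\tfrac{-1}{2(a+2)^2(2a+3)}\neq0$, a contradiction, so $a=b=1$; for $U_4(a,b,c)$, comparing pendant equations forces $a=b=c$, and for $a=b=c\ge2$ the value $\tfrac{2(a+1)}{a+2}$ afforded by the trivial equitable partition must equal $\alpha$, but then the relations $\alpha+\gamma=2+\tfrac1{2(a+1)(a+2)}$ and $\alpha\gamma=\tfrac{3}{(a+2)^2}$ extracted from (\ref{equ-9}) are inconsistent, so $a=b=c=1$; and for $U_5(a),U_6(a,b),U_{11}(a),U_{12}(a,b)$ with the relevant parameter $\ge2$, combining a cycle-vertex instance of (\ref{equ-9}) with the pendant equation forces $\alpha+\gamma=2$ alongside $\beta=1$, whereupon (i) reads $0=\alpha\beta\gamma/(2n)$, impossible. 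This leaves only the minimal graphs $U_2(1),U_3(1,1),U_4(1,1,1),U_5(1),U_6(1,1),U_{11}(1),U_{12}(1,1)$ --- each of diameter $3$, hence with at least four distinct $\mathcal{L}$-eigenvalues --- whose $\mathcal{L}$-spectra I would compute directly using their automorphisms; this shows that only $U_2(1)$ and $U_4(1,1,1)$ have exactly four, the rest having five.

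For the ``if'' direction I would exhibit the four spectra: $\mathrm{Spec}_{\mathcal{L}}(C_6)=\{[2]^1,[\tfrac32]^2,[\tfrac12]^2,[0]^1\}$; $\mathrm{Spec}_{\mathcal{L}}(C_7)=\{[1-\cos\tfrac{6\pi}{7}]^2,[1-\cos\tfrac{4\pi}{7}]^2,[1-\cos\tfrac{2\pi}{7}]^2,[0]^1\}$; $\mathrm{Spec}_{\mathcal{L}}(U_2(1))=\{[\alpha_1]^1,[\tfrac32]^1,[\alpha_3]^1,[0]^1\}$ with $\alpha_1,\alpha_3$ the two non-trivial roots of a cubic (numerically $\approx 1.7287$ and $0.7713$); and $\mathrm{Spec}_{\mathcal{L}}(U_4(1,1,1))=\{[\tfrac{7+\sqrt{13}}{6}]^2,[\tfrac43]^1,[\tfrac{7-\sqrt{13}}{6}]^2,[0]^1\}$, each of which plainly has four distinct entries. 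I expect the third paragraph --- carrying out the analysis of (\ref{equ-9}) uniformly across the seven odd-cycle families, and then the seven small spectrum computations --- to be the most laborious step; the even-cycle case and the cycle count are short.
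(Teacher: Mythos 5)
Your proposal is correct and follows essentially the same route as the paper: reduce to the diameter-$\le 3$ unicyclic graphs of Fig.~\ref{fig-1} via Lemma \ref{lem-3-1}, kill the large-parameter cases by evaluating (\ref{equ-9}) at pendants and cycle vertices together with Lemma \ref{lem-2-3} and Corollary \ref{cor-2-1}, and finish the minimal cases by direct spectrum computation. Your local variations (eliminating all of $U_8(a)$ at once via the two cycle vertices sharing a neighborhood, the trace/determinant argument for $U_9(1,1)$, and the equitable-partition eigenvalue $\frac{2(a+1)}{a+2}$ in place of the paper's full characteristic polynomial of $U_4(a,a,a)$) are sound and in places tidier, though a couple of your quoted intermediate constants (e.g.\ the relations claimed for $U_4(a,a,a)$, and the assertion that $U_2(1)$ has diameter $3$) would need correcting in a full write-up without affecting the conclusion.
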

\begin{proof}
Let $G$ be a unicyclic graph with four distinct $\mathcal{L}$-eigenvalues. Suppose $\mathrm{Spec}_{\mathcal{L}}(G)=\{[\alpha]^{m_1},[\beta]^{m_2},[\gamma]^{m_3},[0]^1\}$, where $2\geq \alpha>\beta>\gamma>1$. Then the diameter of  $G$ is equal to $2$ or $3$ due to $G$ cannot be a complete graph. Thus $G$ must be one of the graphs (excluding $U_1=K_3$) shown in Fig. \ref{fig-1}. If $G\in\{U_7,U_{10},U_{13},U_{14}\}$, then we have $G=U_{13}=C_6$ or $G=U_{14}=C_7$ by simple computation. To prove the result, it suffices to consider the remain cases.

First suppose that $G=U_2(a)$. If $a\geq 2$, then $u_1$ and $u_2$ (see Fig. \ref{fig-1}) have the same neighborhood, and thus $1$ is an $\mathcal{L}$-eigenvalue of $G$ by Lemma \ref{lem-2-3} (i). Then $\beta=1$ or $\gamma=1$ by Lemma \ref{lem-2-2} (v). Without loss of generality, we assume that $\gamma=1$. Putting $u=u_1$, $u=u_0$ and $u=v_0$ (see Fig. \ref{fig-1}) in (\ref{equ-9}), respectively, we obtain the following three equalities:  
\begin{equation*}
\left\{
\begin{aligned}
&(\alpha+\beta-2)\cdot \frac{1}{a+2}=\frac{\alpha\beta}{2m},\\
&\frac{1}{2\cdot2}\cdot 2+(\alpha+\beta-2)(a+\frac{1}{2}\cdot 2)=\frac{\alpha\beta}{2m}(a+2)^2,\\
&\frac{1}{2\cdot(a+2)}\cdot 2+(\alpha+\beta-2)(\frac{1}{a+2}+\frac{1}{2})=\frac{\alpha\beta}{2m}\cdot2^2.\\
\end{aligned}
\right.
\end{equation*}
By simple computation we obtain  $a=0$, which is contrary to $a\geq 2$. If $a=1$, then $G=U_2(1)$ with $\mathrm{Spec}_{\mathcal{L}}(U_2(1))=\{[1.7287], [1.5000],[0.7713],[0]\}$, as required.

Suppose $G=U_3(a,b)$. If $a\geq 2$ or $b\geq 2$, then $1$ is an $\mathcal{L}$-eigenvalue of $G$. As above, assume that $\gamma=1$. Putting $u=u_1$, $u=v_1$, $u=u_0$ and $u=w_0$ in (\ref{equ-9}) one by one, we obtain the following four equalities:
\begin{equation*}
\left\{
\begin{aligned}
&(\alpha+\beta-2)\cdot \frac{1}{a+2}=\frac{\alpha\beta}{2m},\\
&(\alpha+\beta-2)\cdot \frac{1}{b+2}=\frac{\alpha\beta}{2m},\\
&\frac{1}{2(b+2)}\cdot 2+(\alpha+\beta-2)(a+\frac{1}{b+2}+\frac{1}{2})=\frac{\alpha\beta}{2m}(a+2)^2,\\
&\frac{1}{(a+2)(b+2)}\cdot 2+(\alpha+\beta-2)(\frac{1}{a+2}+\frac{1}{b+2})=\frac{\alpha\beta}{2m}\cdot2^2,\\
\end{aligned}
\right.
\end{equation*}
from which one can easily deduce that $a=b=0$, a contradiction. If $a=b=1$, then $\mathrm{Spec}_{\mathcal{L}}(G)=\mathrm{Spec}_{\mathcal{L}}(U_3(1,1))=\{[1.7676],[1.6667],[1],[0.5657],[0]\}$, contrary to our assumption.

Now suppose that $U=U_4(a,b,c)$. If $\max\{a,b,c\}\geq 2$, as above, putting $u=u_1$, $u=v_1$ and $u=w_1$ in (\ref{equ-9}) one by one, we obtain
\begin{equation*}
\left\{
\begin{aligned}
&(\alpha+\beta-2)\cdot \frac{1}{a+2}=\frac{\alpha\beta}{2m},\\
&(\alpha+\beta-2)\cdot \frac{1}{b+2}=\frac{\alpha\beta}{2m},\\
&(\alpha+\beta-2)\cdot \frac{1}{c+2}=\frac{\alpha\beta}{2m},\\
\end{aligned}
\right.
\end{equation*}
which implies that $a=b=c$, and so $G=U_4(a,a,a)$. Then it suffices to consider the $\mathcal{L}$-spectrum of the graph $U_4(a,a,a)$, where $a\geq 1$. In fact, the $\mathcal{L}$-polynomial of $U_4(a,a,a)$ is equal to
\begin{align*}
&~~~~~P_\mathcal{L}(U_4(a,a,a))\\
&=|\lambda I-\mathcal{L}|\\
&=\left|\begin{matrix}
(x-1)I_a&0&0&\frac{1}{\sqrt{a+2}}\mathbf{j}_a&0&0\\
0&(x-1)I_a&0&0&\frac{1}{\sqrt{a+2}}\mathbf{j}_a&0\\
0&0&(x-1)I_a&0&0&\frac{1}{\sqrt{a+2}}\mathbf{j}_a\\
\frac{1}{\sqrt{a+2}}\mathbf{j}_a^T&0&0&x-1&\frac{1}{a+2}&\frac{1}{a+2}\\
0&\frac{1}{\sqrt{a+2}}\mathbf{j}_a^T&0&\frac{1}{a+2}&x-1&\frac{1}{a+2}\\
0&0&\frac{1}{\sqrt{a+2}}\mathbf{j}_a^T&\frac{1}{a+2}&\frac{1}{a+2}&x-1\\
\end{matrix}
\right|\\
&=\left|\begin{matrix}
(x-1)I_a&0&0&0&0&0\\
0&(x-1)I_a&0&0&0&0\\
0&0&(x-1)I_a&0&0&0\\
\frac{1}{\sqrt{a+2}}\mathbf{j}_a^T&0&0&\frac{(a+2)(x-1)^2-a}{(a+2)(x-1)}&\frac{1}{a+2}&\frac{1}{a+2}\\
0&\frac{1}{\sqrt{a+2}}\mathbf{j}_a^T&0&\frac{1}{a+2}&\frac{(a+2)(x-1)^2-a}{(a+2)(x-1)}&\frac{1}{a+2}\\
0&0&\frac{1}{\sqrt{a+2}}\mathbf{j}_a^T&\frac{1}{a+2}&\frac{1}{a+2}&\frac{(a+2)(x-1)^2-a}{(a+2)(x-1)}\\
\end{matrix}
\right|\\
&=\frac{1}{(a+2)^3}x(x-1)^{3a-3}p_1(x)(p_2(x))^2,
\end{align*}
where $p_1(x)=(a+2)x-2(a+1)$ and $p_1(x)=(a+2)x^2-(2a+5)x+3$. It is easy to verify that $1$ cannot be a root of $p_1(x)$ or $p_2(x)$ due to $a\neq 0$. Also, $p_1(x)$ and $p_2(x)$ cannot share the same root because $a\neq 0$. Furthermore, the roots of $p_2(x)$ must be distinct because the discriminant $(2a+5)^2-4\cdot(a+2)\cdot 3=4a^2+8a+1>0$ due to $a\geq 1$. Therefore, $G=U_4(a,a,a)$ has four distinct $\mathcal{L}$-eigenvalues if and only if $a=1$.

Next suppose that $U=U_5(a)$. If $a\geq 2$, as above, putting $u=u_1$ and $u=u_0$ in (\ref{equ-9}), we have
\begin{equation*}
\left\{
\begin{aligned}
&(\alpha+\beta-2)\cdot \frac{1}{a+1}=\frac{\alpha\beta}{2m},\\
&(\alpha+\beta-2)\cdot (a+\frac{1}{3})=\frac{\alpha\beta}{2m}\cdot (a+1)^2,
\end{aligned}
\right.
\end{equation*}
from which one can deduce that $a+1=a+\frac{1}{3}$, a contradiction. If $a=1$, then $\mathrm{Spec}_{\mathcal{L}}(G)=\mathrm{Spec}_{\mathcal{L}}(U_5(1))=\{[1.8566],[1.5000],[1.2975],[0.3459],[0]\}$,  a contradiction.

Suppose $G=U_6(a,b)$. If $a\geq 2$ or $b\geq 2$, as above, putting $u=u_1$ and $u=u_0$ in (\ref{equ-9}) in turn, we get
\begin{equation*}
\left\{
\begin{aligned}
&(\alpha+\beta-2)\cdot \frac{1}{a+1}=\frac{\alpha\beta}{2m},\\
&(\alpha+\beta-2)\cdot (a+\frac{1}{b+3})=\frac{\alpha\beta}{2m}\cdot (a+1)^2,
\end{aligned}
\right.
\end{equation*}
implying that $b=-2$, contrary to $b\geq 1$. If $a=b=1$, then $\mathrm{Spec}_{\mathcal{L}}(G)=\mathrm{Spec}_{\mathcal{L}}(U_6(1,1))=\{[1.8762],[1.5000]^2,[0.7838],[0.3400],[0]\}$, which is impossible.

Now suppose that $G\in\{U_8(a),U_9(a, b)\}$. Then $G$ is a bipartite graph. We claim that $a=b=1$, since otherwise $G$ cannot have four distinct $\mathcal{L}$-eigenvalues by Corollary \ref{cor-2-1}. Thus $G=U_8(1)$ or $G=U_9(1,1)$, which are also impossible due to $\mathrm{Spec}_{\mathcal{L}}(U_8(1))=\{[2],[1.4082],[1],[0.5918],[0]\}$ and $\mathrm{Spec}_{\mathcal{L}}(U_9(1,1))=\{[2],[1.5000],[1.3333],[0.6667],$ $[0.5000],[0]\}$.

Suppose $G=U_{11}(a)$. If $a\geq 2$, putting the $u=u_1$ and  $u=u_0$ in (\ref{equ-9}), we have
\begin{equation*}
\left\{
\begin{aligned}
&(\alpha+\beta-2)\cdot \frac{1}{a+2}=\frac{\alpha\beta}{2m},\\
&(\alpha+\beta-2)\cdot (a+\frac{1}{2}\cdot 2)=\frac{\alpha\beta}{2m}\cdot (a+2)^2.
\end{aligned}
\right.
\end{equation*}
Thus we deduce that $a+2=a+1$, a contradiction. If $a=1$, then $\mathrm{Spec}_{\mathcal{L}}(G)=\mathrm{Spec}_{\mathcal{L}}(U_{11}(1))=\{[1.8691],[1.8090],[1.1759],[0.6910],[0.4550],[0]\}$, contrary to our assumption.

Suppose $G=U_{12}(a,b)$. If $a\geq 2$ or $b\geq 2$, putting $u=u_1$ and $u=u_0$ in (\ref{equ-9}), we obtain the following three equalities:
\begin{equation*}
\left\{
\begin{aligned}
&(\alpha+\beta-2)\cdot \frac{1}{a+2}=\frac{\alpha\beta}{2m},\\
&(\alpha+\beta-2)\cdot (a+\frac{1}{2}+\frac{1}{b+2})=\frac{\alpha\beta}{2m}\cdot (a+2)^2,
\end{aligned}
\right.
\end{equation*}
which implies that $b=-\frac{4}{3}$, a contradiction. Then $a=b=1$, then $\mathrm{Spec}_{\mathcal{L}}(G)=\mathrm{Spec}_{\mathcal{L}}(U_{12}(1,1))=\{[1.8931],[1.8259],[1.3766],[1],[0.4642],[0.4402],[0]\}$, a contradiction.

We complete the proof.
\end{proof}

\end{document}